\documentclass[11pt]{amsart}

\usepackage[english]{babel}
\usepackage[utf8]{inputenc}
\usepackage[T1]{fontenc}

\usepackage{amsfonts, amsmath, amssymb}
\usepackage{amsthm}
\usepackage{mathrsfs}

\usepackage{xcolor}

\newtheorem{thm}{Theorem}[section]
\newtheorem{prop}[thm]{Proposition}
\newtheorem{lemma}[thm]{Lemma}
\newtheorem{cor}[thm]{Corollary}

\theoremstyle{definition}
\newtheorem{defin}[thm]{Definition}
\newtheorem{rmk}[thm]{Remark}
\newtheorem{ex}[thm]{Example}

\DeclareMathOperator{\GL}{GL}

\newcommand{\kk}{\Bbbk}

\newcommand{\ZZ}{\mathbb{Z}}
\newcommand{\NN}{\mathbb{N}}

\newcommand{\A}{\mathbb{A}}

\newcommand{\m}{\mathfrak{m}}

\newcommand{\Hilb}[2]{\mathrm{Hilb}^{#1}(\A^{#2})}

\newcommand{\h}{\mathbf{h}}
\newcommand{\cH}{\mathscr{H}}
\newcommand{\cC}{\mathscr{C}}
\newcommand{\tC}{\widetilde{C}}
\newcommand{\tcC}{\widetilde{\mathscr{C}}}

\newcommand{\ComMat}[2]{C_{#1}(\mathbb{M}_{#2})}

\newcommand{\cA}{\mathcal{A}}

\newcommand{\II}{\mathbf{I}}

\begin{document}

\title{2-step ideals and commuting matrices}
\author{Klemen Šivic}
\address{University of Ljubljana, Faculty of mathematics and physics, Jadranska 19, 1000 Ljubljana, Slovenia, and Institute of mathematics, physics and mechanics, Jadranska 19, 1000 Ljubljana, Slovenia}
\email{klemen.sivic@fmf.uni-lj.si}
\maketitle

\begin{abstract}
2-step ideals are ideals $I$ of the polynomial ring that satisfy $\m^{k+2}\subsetneq I\subsetneq \m^k$ where $\m$ is the maximal ideal generated by variables. This class of ideals was recently introduced in [F. Giovenzana, L. Giovenzana, M. Graffeo, P. Lella, {\em New components of Hilbert schemes of points and 2-step ideals}, 2025, arxiv: 2507.02789] with the aim to obtain new irreducible components of $\Hilb{d}{n}$ and in particular to obtain new loci in $\Hilb{d}{3}$ of large dimension. In this paper we use the correspondence between Hilbert schemes and varieties of commuting matrices to define loci of commuting matrices that correspond to 2-step ideals. Then we estimate the dimensions of the obtained loci to get new proofs for the estimates of dimensions of loci of 2-step ideals in the case $n=3$. In this case we are also able to omit some assumptions in the dimension estimates in the above mentioned paper.
\end{abstract}

\section{Introduction}

The Hilbert scheme of points $\Hilb{d}{n}$ is a moduli space of zero-dimensional subschemes of $\A^n$ of degree $d$. Equivalently, it parametrizes ideals $I$ in the polynomial ring $\Bbbk[x_1,\ldots ,x_n]$ such that $\Bbbk[x_1,\ldots ,x_n]/I$ is $d$-dimensional vector space over $\Bbbk$. $\Hilb{d}{n}$ is irreducible and smooth for all $d$ if $n=2$ \cite{Fog}, but it is in general reducible and highly singular for $n\ge 3$, see e.g. \cite{Jel - generically nonreduced, Jel - pathologies}. The closure of the locus parametrizing $d$-tuples of distinct points is always an irreducible component of $\Hilb{d}{n}$ of dimension $dn$, called the smoothable component. For $n\ge 4$ this is the only component if and only if $d\le 7$ \cite{IE, Maz}. For various sufficiently large $d$ and $n$ many irreducible components of $\Hilb{d}{n}$ have been found \cite{CFM, GGGL, GGGL - unexpected, Hui1, Hui2, Iar,  IE,  IK, Jel - elementary components, Jel - generically nonreduced, KK, SS, SS2, Sha, Sza}, indicating that the number of components grows very quickly as $d$ and $n$ grow, and that the complete classification of components  is currently out of reach. For a fixed $n\ge 3$ the number of components is at least exponential in $d$ by \cite[Theorem 3]{Iar2}. The classification of components of $\Hilb{d}{n}$ was done only for $d\le 10$ \cite{CEVV, GKS}.

The most challenging case is $n=3$. There is a large gap where irreducibility of the Hilbert scheme is not known. It is known that $\Hilb{d}{3}$ is irreducible for $d\le 11$ \cite{DJNT}, while it is reducible for $d\ge 78$ by a result of Iarrobino \cite{Iar} that has not been improved in the last 42 years. Moreover, no irreducible component of a Hilbert scheme of points in $\A^3$, other than the smoothable one, is known, see \cite[Problem VII]{Jel - open problems}. Even more, not a single provably non-smoothable algebra is known. One difficulty seems to lie in the large tangent spaces, which indicate that all non-smoothable components of $\Hilb{d}{3}$ might be generically non-reduced, see \cite[Problem I]{Jel - open problems}, or \cite[p. 32]{GGGL}. The large tangent spaces in the case $n=3$ prevent to apply the most common techniques to find components of $\Hilb{d}{n}$, which include finding smooth points or showing that the negative tangents are trivial, the approach initiated by Jelisiejew in \cite{Jel - elementary components}.  Iarrobino's approach to prove reducibility of $\Hilb{78}{3}$ was nonconstructive. He found a locus of compressed algebras inside $\Hilb{78}{3}$ with dimension more than $3\cdot 78$. The dimension argument is still the only method that has been used to prove reducibility of $\Hilb{d}{3}$.

Motivated by the above questions the authors of \cite{GGGL} defined 2-step ideals as ideals $I$ of the polynomial ring $\Bbbk[x_1,\ldots ,x_n]$ lying between $(x_1,\ldots ,x_n)^{k+2}$ and $(x_1,\ldots ,x_n)^k$ for some $k>0$. If $I$ is a 2-step ideal, then the Hilbert function of the 2-step algebra $\Bbbk[x_1,\ldots ,x_n]/I$ is of the form $(r_0,r_1,\ldots ,r_{k-1},q_k,q_{k+1})$ where $r_j={j+n-1\choose n-1}$ for each $j$. Such a Hilbert function is called 2-step Hilbert function.  In \cite{GGGL} the authors estimated dimensions of the loci of 2-step ideals in some cases and applied these estimates to get various loci inside $\Hilb{d}{3}$ of dimension at least $3d$ if $d$ is large enough. They also showed that the loci of 2-step ideals of given Hilbert function are often irreducible components of $\Hilb{d}{n}$ for $n\ge 4$.

Hilbert schemes are closely related to varieties of commuting matrices, see \cite{GKS, HJ, JS, Nak}. The aim of this paper is to describe the subloci of varieties of triples of commuting matrices that correspond to the 2-step ideals of $\Bbbk[x_1,x_2,x_3]$ and to use this correspondence to give alternative proofs of some dimension estimates from \cite{GGGL}. Moreover, our technique will enable us to omit some assumptions in the results of \cite{GGGL}. More precisely, our main result is the following (see Corollary \ref{cor: main theorem}).

\begin{thm}\label{thm: main}
Let $n=3$, let $\h=(r_0,r_1,\ldots ,r_{k-1},q_k,q_{k+1})$ be a 2-step Hilbert function and let $H_\h$ be the sublocus of $\Hilb{d}{3}$ consisting of all local algebras with Hilbert function $\h$ that are supported at the origin. Then
$$\dim H_\h\ge q_kh_k+q_{k+1}(h_{k+1}-2h_k)$$
where $h_j=r_j-q_j$ for $j\in\{k,k+1\}$.
\end{thm}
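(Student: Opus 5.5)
The plan is to parametrize the $2$-step ideals with Hilbert function $\h$ supported at the origin directly, and then bound the dimension of the parameter space from below by counting equations. Write $S=\kk[x_1,x_2,x_3]$ and $S_j$ for its homogeneous part of degree $j$, so $\dim S_j=r_j$. For an ideal $I$ with $\m^{k+2}\subseteq I\subseteq\m^k$ and Hilbert function $\h$, the associated graded ideal is determined by two subspaces:
\begin{itemize}
\item $W\subseteq S_k$ of dimension $h_k$ (the initial forms of degree $k$);
\item $U\subseteq S_{k+1}$ of dimension $h_{k+1}$, subject to the condition $x_iW\subseteq U$ for $i=1,2,3$.
\end{itemize}
Conversely, given such a pair $(W,U)$ and any linear map $\varphi\colon W\to S_{k+1}/U$, choose lifts and set
$$I_{W,U,\varphi}=\bigl(w+\widetilde{\varphi}(w)\,:\,w\in W\bigr)+U+\m^{k+2}.$$
This ideal does not depend on the choice of lifts, since $U\subseteq I$. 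It is closed under multiplication by the variables, because $x_i(w+\widetilde\varphi(w))\in x_iW+\m^{k+2}\subseteq U+\m^{k+2}$. Its Hilbert function is exactly $\h$, since its associated graded ideal is $W\oplus U\oplus\m^{k+2}$. Distinct triples $(W,U,\varphi)$ give distinct ideals: $W$ and $U$ are recovered as the initial forms, and then $\varphi$ is recovered from the elements of $I$ with leading form $w$. Hence $H_\h$ is the image of an injective morphism from the total space $\mathcal{E}$ of a vector bundle of rank $h_kq_{k+1}$ over the incidence variety
$$\mathcal{Z}=\{(W,U)\in \mathrm{Gr}(h_k,S_k)\times\mathrm{Gr}(h_{k+1},S_{k+1})\,:\,\m_1W\subseteq U\},$$
and so $\dim H_\h=\dim\mathcal{Z}+h_kq_{k+1}$.

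Next, bound $\dim\mathcal{Z}$ from below. The ambient product of Grassmannians is smooth and irreducible of dimension $h_kq_k+h_{k+1}q_{k+1}$. On it, $\mathcal{Z}$ is the zero locus of the section of the vector bundle
$$\mathcal{H}om\bigl(\mathcal{W}\otimes S_1,\ S_{k+1}/\mathcal{U}\bigr)$$
induced by multiplication, where $\mathcal{W}$ and $\mathcal{U}$ are the tautological subbundles. This bundle has rank $3h_kq_{k+1}$. Locally $\mathcal{Z}$ is therefore cut out by $3h_kq_{k+1}$ equations, so by Krull's principal ideal theorem every irreducible component of $\mathcal{Z}$, if $\mathcal{Z}$ is nonempty, has dimension at least
$$h_kq_k+h_{k+1}q_{k+1}-3h_kq_{k+1}.$$
Adding the fibre dimension $h_kq_{k+1}$ gives
$$\dim H_\h\ \ge\ q_kh_k+q_{k+1}(h_{k+1}-2h_k),$$
which is the claimed bound.

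It remains to ensure that $\mathcal{Z}\neq\emptyset$. This is exactly the statement that $\h$ is the Hilbert function of some graded $2$-step algebra, which holds because $\h$ is a $2$-step Hilbert function: it is realized, for instance, by a lex-segment ideal via Macaulay's theorem. I will also take $\kk$ algebraically closed, so that the dimension statements apply.

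The main point requiring care is that the equation count is applied to every irreducible component of $\mathcal{Z}$, not only to the generic behaviour of $W$. For generic $W$ one may have $\dim \m_1W>h_{k+1}$, in which case the naive fibration over $\mathrm{Gr}(h_k,S_k)$ fails. Working on the product of Grassmannians avoids this issue, since the Krull bound does not require knowing $\dim\m_1W$. The other point to verify carefully is that $\mathcal{E}\to\Hilb{d}{3}$ is a morphism; it is injective by the recovery argument above, and injectivity is what makes the dimension count transfer to $H_\h$. To see it is a morphism, I will check that the family of quotients $S/I_{W,U,\varphi}$ is flat of rank $d$ over $\mathcal{E}$, which holds because a fixed monomial complement of the associated graded ideal gives a local basis of the quotient.
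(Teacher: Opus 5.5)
Your proof is correct, and it takes a genuinely different and more direct route than the paper. The paper never parametrizes the pairs $(W,U)$. Instead it passes through the principal $\GL_d$-bundle $\mathcal{U}^{st}\to\Hilb{d}{3}$ and normalizes the commuting triples to the block shape defining $\tC_\h$. It then computes the relevant stabilizer (Proposition~\ref{prop: dim of stabilizer}) to turn $\dim\tC_\h$ into $\dim H_\h$. Finally it bounds $\dim\tcC_\h$ by projecting onto the 1-step locus $\tC_{\h'}$: the image is cut out by $\mathrm{rank}\, M_{D_1,D_2,D_3}\le 3q_k-q_{k+1}$ and controlled by the classical determinantal codimension bound, the fibres ($q_{k+1}$-tuples of independent rows of $\mathrm{coker}\, M_{D_1,D_2,D_3}$) have dimension at least $q_{k+1}^2$, and the case $h_{k+1}\ge 3h_k$ is handled separately.

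The two arguments match under a dictionary:
\begin{itemize}
\item Lemma~\ref{lemma: vector bundle over homogeneous locus}, together with the $Q_{k+1,k}$ block of the stabilizer, is your fibre $\mathrm{Hom}(W,S_{k+1}/U)$.
\item The map $\pi$ is your projection $(W,U)\mapsto W$, up to choices of bases.
\item The rank condition on $M_{D_1,D_2,D_3}$ says exactly $\dim\m_1W\le h_{k+1}$.
\item A point of the fibre of $\pi$ is a choice of $U\supseteq\m_1W$ plus a frame.
\end{itemize}
Your single Krull bound on the incidence variety $\mathcal{Z}\subseteq\mathrm{Gr}(h_k,S_k)\times\mathrm{Gr}(h_{k+1},S_{k+1})$ replaces the determinantal bound plus the fibre-dimension theorem.

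Your route buys brevity and needs no case distinction. It also avoids the explicit matrices $E_i,G_i$ and Lemma~\ref{lemma: commutativity 1 - vector spaces}. It visibly does not use $n=3$: with $\dim S_1=n$ the same count gives $\dim H_\h\ge q_kh_k+q_{k+1}(h_{k+1}-(n-1)h_k)$.

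The paper's route buys the parallel bounds for the commuting-matrix loci $\tC_\h$ and $C_\h$ (Corollary~\ref{cor: main theorem}), which is its stated purpose. It also isolates the explicit linear matrix $M_{D_1,D_2,D_3}$, whose special structure the author proposes to exploit for sharper estimates. Your setup admits the analogous refinement by stratifying $\mathcal{Z}$ according to $\dim\m_1W$.
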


The restriction to the case $n=3$ is motivated by the open problems on $\Hilb{d}{3}$ described above. In this case Theorem \ref{thm: main} is a generalization of \cite[Corollary 3.18]{GGGL}, which is one of the main dimension estimates in \cite{GGGL}. Note that in \cite[Corollary 3.18]{GGGL} it is assumed that $h_{k+1}\ge \frac{8}{3}h_k$, while our Theorem \ref{thm: main} holds without this assumption. We can therefore get new examples of loci in $\Hilb{d}{3}$ of dimension at least $3d$, however these examples all satisfy $d\ge 78$. The smallest example has $d=98$ and Hilbert function $(1,3,6,10,15,21,28,13,1)$, see Remark \ref{rmk: smallest new case}.

We briefly describe the techniques we use to prove Theorem \ref{thm: main}. To each 2-step Hilbert function $\h$ we assign a special locally closed set $\tC_\h$ of triples of commuting matrices, whose $\GL_d$-orbit corresponds to $H_\h$ in the sense of \cite{GKS, HJ, JS, Nak}. We estimate the dimension of $\tC_\h$ as follows. We define a projection $\tC_\h\to \tC_{\h'}$ where $\h'$ is obtained from $\h$ by deleting the last term. We observe that the image is a certain determinantal variety and the fibres are cokernels of certain matrices. We estimate the dimension of the image using well known estimates for codimensions of determinantal varieties. Theorem \ref{thm: main} then follows by the theorem on dimensions of fibres.

The paper is organized as follows. In Section 2 we recall the results from the literature that are needed later in the paper. We define 2-step ideals and describe the connection between Hilbert schemes and commuting matrices. We use this correspondence to define sets $\tC_\h$ in Section 3. In Section 4 we compute the dimension of the stabilizer of $\tC_\h$ in $\GL_d$. In Section 5 we compute the dimension of $H_\h$ if $\h$ is a 1-step Hilbert function, and in Section 6 we estimate the dimension for 2-step Hilbert functions.

\subsection*{Acknowledgments}

The author would like to thank Maciej Ga{\l}\k{a}zka, Michele Graffeo, Joachim Jelisiejew and Hanieh Keneshlou for many discussions on Hilbert schemes and commuting matrices. Many thanks also to two anonymous referees for careful reading of the paper and for their suggestions to improve the original manuscript.

The author is partially supported by the Slovenian Research and Innovation Agency program P1-0222 and grants J1-70017, J1-60011 and J1-50001.

\section{Preliminaries}

In this section we recall the notions of Hilbert function and of 2-step ideal, and describe the relation between Hilbert schemes and varieties of commuting matrices.

We first fix the notation. Throughout the paper we will work over algebraically closed field $\kk$. Let $R=\Bbbk[x_1,\ldots ,x_n]$ be the polynomial ring in $n$ variables, and $\m$ its maximal ideal generated by variables. In Section 2 we will work with an arbitrary $n$, while from Section 3 we will specialize to $n=3$.

\subsection{The Hilbert function and 2-step ideals}

We first recall the notion of Hilbert function. Let $\cA=\oplus_{j\in \ZZ}\cA_j$ be a $\ZZ$-graded $\Bbbk$-algebra of finite type and $M=\oplus_{j\in \ZZ}M_j$ a finitely generated $\ZZ$-graded $\cA$-module. The Hilbert function of $M$ is the function
\begin{eqnarray*}
\h_M\colon \ZZ&\to& \NN_0\\
j&\mapsto&\dim_{\Bbbk}M_j.
\end{eqnarray*}
In particular, the Hilbert function of a graded algebra $\cA$ is the Hilbert function of $\cA$ as a module over itself. If $(\cA,\mathfrak{n})$ is a local Artinian algebra, then the Hilbert function $\h_\cA$ of $\cA$ is defined to be the Hilbert function of its associated graded algebra
$$\mathrm{gr}\,  \cA=\oplus_{j=0}^{\infty}\mathfrak{n}^j/\mathfrak{n}^{j+1},$$
i.e.
$$\h_\cA(j)=\dim \mathfrak{n}^j-\dim \mathfrak{n}^{j+1},$$
for each $j$. We will always assume that $n$ is the embedding dimension of $\cA$, so $\h_\cA(1)=n$. We will use the notation from \cite{GGGL}. The $j$-th graded piece of $R$ will be denoted by $R_j$ and its dimension by $r_j$. Then
$$r_j={j+n-1\choose n-1},$$
for each $j$. If $R/I$ is a local or graded Artinian algebra, we define
$$q_j:=\h_{R/I}(j)\quad \mathrm{and}\quad h_j:=r_j-q_j,$$
for each $j$.

The Hilbert function of an Artinian algebra $\cA$ attains only finitely many nonzero values, so we usually write it as a vector of nonzero values $\h_\cA=(q_0,q_1,\ldots ,q_s)$ where $s$ is the so-called socle degree of $\cA$. The classification of vectors of positive integers that are Hilbert functions of Artinian algebras is given by Macaulay's theorem, see e.g. \cite[Theorem 4.2.10]{BH}.

Throughout the paper we will consider various loci of algebras having given Hilbert function, therefore we fix notation. Given a Hilbert function $\h$ we denote by $H_\h$ the sublocus of $\Hilb{d}{n}$ consisting of all local algebras with Hilbert function $\h$ that are supported at the origin, and by $\cH_\h$ the homogeneous sublocus of $H_\h$, i.e., the locus of all graded algebras with Hilbert function $\h$.

Following \cite{GGGL} we now define 1-step and 2-step ideals.

\begin{defin}
\begin{enumerate}
\item
An ideal $I$ of $R$ is {\em 1-step ideal of order $k>0$} if
$$\m^{k+1}\subsetneq I\subsetneq \m^k.$$
\item
An ideal $I$ of $R$ is {\em 2-step ideal of order $k>0$} if
$$\m^{k+2}\subsetneq I\subsetneq \m^k\quad \mathrm{and}\quad I\not\subseteq \m^{k+1}.$$
\item
A local Artinian algebra $\cA$ is {\em 2-step algebra} (resp. {\em 1-step algebra}) if it is of the form $\cA=R/I$ for some 2-step (resp. 1-step) ideal $I$.
\item
A Hilbert function $\h=(q_0,q_1,\ldots ,q_s)$ is {\em 2-step Hilbert function} (resp. {\em 1-step Hilbert function}) if it is the Hilbert function of some 2-step (resp. 1-step) algebra.
\end{enumerate}
\end{defin}

Note that every 2-step Hilbert function of order $k$ is of the form
$$\h=(r_0,r_1,\ldots ,r_{k-1},q_k,q_{k+1})$$
where $q_k$ is strictly smaller than $r_k$, and $q_{k+1}$ is nonzero if $\h$ is not 1-step Hilbert function. 1-step Hilbert functions are of the same form, but without the last term. Note also that a 2-step algebra of order 1 has embedding dimension smaller than $n$, therefore we will always assume that $k>1$.

\subsection{Hilbert schemes and commuting matrices}

In this subsection we recall the connection between Hilbert schemes and commuting matrices. We refer to \cite{GKS, HJ, JS, Nak}.

Let $\mathbb{M}_d$ be the set of all $d\times d$ matrices over $\Bbbk$. The identity matrix will be denoted by $\II$. Let
$$\ComMat{n}{d}=\{(A_1,\ldots ,A_n)\in \mathbb{M}_d^n;A_iA_j=A_jA_i\, \mathrm{for}\, \mathrm{all}\, i,j=1,\ldots ,n\}.$$
Although $\ComMat{n}{d}$ is traditionally called variety of $n$-tuples of commuting $d\times d$ matrices, we consider it as a subscheme of $\mathbb{M}_d^n$ defined by the ideal generated by the entries of all commutators $A_iA_j-A_jA_i$. Let $\mathcal{U}=\ComMat{n}{d}\times \kk^d$. We say that a point $(A_1,\ldots ,A_d,v)\in \mathcal{U}$ is stable if $v$ generates $\kk^d$ as a module over $\kk[A_1,\ldots ,A_n]$. In such case we say also that $v$ is a cyclic vector for the $n$-tuple $(A_1,\ldots ,A_n)\in \ComMat{n}{d}$. We denote the set of all stable $(n+1)$-tuples of $\mathcal{U}$ by $\mathcal{U}^{st}$. This is an open subscheme of $\mathcal{U}$.

If $(A_1,\ldots ,A_n)\in \ComMat{n}{d}$ is an arbitrary commuting $n$-tuple, we can define an $R$-module structure on $\Bbbk^d$ by $x_i\cdot u:=A_iu$. If $(A_1,\ldots ,A_d,v)\in \mathcal{U}^{st}$, then the map $f\mapsto f(A_1,\ldots ,A_d)v$ is an $R$-module homomorphism $R\to \kk^d$. This map is surjective, as the tuple $(A_1,\ldots ,A_d,v)$ is stable. Let $I$ be the kernel of this map. Then $\kk^d$ is isomorphic to $R/I$ as an $R$-module and $\kk[A_1,\ldots ,A_n]$ is isomorphic to $R/I$ as an algebra. Therefore, we have a map
\begin{eqnarray*}
p_1\colon \mathcal{U}^{st}&\to&\Hilb{d}{n}\\
(A_1,\ldots ,A_d,v)&\mapsto&R/I.
\end{eqnarray*}
This map defines a morphism of schemes and $\mathcal{U}^{st}$ is a principal $\GL_d$-bundle over $\Hilb{d}{n}$, see \cite[Proposition 3.7]{JS}. The converse construction (i.e., the construction of a section of the bundle) is as follows. Given an algebra $\cA=R/I\in \Hilb{d}{n}$ choose a $\kk$-basis of $\cA$ and, with respect to the chosen basis, let $A_i$ denote the matrix of multiplication with the variable $x_i$ on $\cA$. Then the matrices $A_1,\ldots ,A_d$ commute and $(A_1,\ldots ,A_d,1)$ is a stable point in $\mathcal{U}$ corresponding to $\cA$.

Apart from the above morphism $p_1\colon \mathcal{U}^{st}\to \Hilb{d}{n}$, we also have the projection $p_2\colon \mathcal{U}^{st}\to \ComMat{n}{d}$. Since $p_1$ defines a $\GL_d$-bundle, we have the following result.

\begin{lemma}[{\cite[\S 3.4]{JS}}]\label{lemma: bijection components of HS and commuting matrices}
The irreducible components of $\Hilb{d}{n}$ are in bijection with irreducible components of $\mathcal{U}^{st}$, which are in bijection with those irreducible components of $\ComMat{n}{d}$ that contain an $n$-tuple of matrices having a cyclic vector.

Moreover, if $\mathcal{Z}^H$ is an irreducible component of $\Hilb{d}{n}$ and $\mathcal{Z}^C$ is the corresponding irreducible component of $\ComMat{n}{d}$, then $\dim \mathcal{Z}^H=d-d^2+\dim \mathcal{Z}^C$.
\end{lemma}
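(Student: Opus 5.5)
The plan is to pass through the intermediate space $\mathcal{U}^{st}$ in two steps: first compare $\mathcal{U}^{st}$ with $\Hilb{d}{n}$ via $p_1$, then compare $\mathcal{U}^{st}$ with $\ComMat{n}{d}$ via the inclusion $\mathcal{U}^{st}\subseteq \mathcal{U}=\ComMat{n}{d}\times\kk^d$. Since only irreducible components and dimensions are at stake, both statements are topological. We may therefore replace every scheme by its reduced structure throughout.

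For the first step we use that $p_1\colon \mathcal{U}^{st}\to\Hilb{d}{n}$ is a principal $\GL_d$-bundle, as recalled above from \cite[Proposition 3.7]{JS}. In particular $p_1$ is faithfully flat, hence open and surjective, and all its fibres are isomorphic to $\GL_d$, which is irreducible of dimension $d^2$. We claim that $Z\mapsto p_1^{-1}(Z)$ is a bijection from irreducible closed subsets of $\Hilb{d}{n}$ to $\GL_d$-stable irreducible closed subsets of $\mathcal{U}^{st}$, with inverse $W\mapsto p_1(W)$.
\begin{enumerate}
\item The preimage of an irreducible set is irreducible. This is the standard fact that an open surjective morphism with irreducible fibres onto an irreducible base has irreducible source.
\item Every irreducible component of $\mathcal{U}^{st}$ is $\GL_d$-stable, because $\GL_d$ is connected. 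Hence every component is of the form $p_1^{-1}(Z)$, and maximality is preserved in both directions.
\item The theorem on dimension of fibres, applied to the equidimensional fibres, gives $\dim p_1^{-1}(Z)=\dim Z+d^2$.
\end{enumerate}
The one point needing care here is irreducibility of preimages, since local triviality of $p_1$ only holds in the fppf (or étale) topology. That is why I rely on openness of the flat map $p_1$ rather than on a Zariski-local product decomposition.

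For the second step, the irreducible components of $\mathcal{U}=\ComMat{n}{d}\times\kk^d$ are exactly the products $\mathcal{Z}\times\kk^d$ with $\mathcal{Z}$ an irreducible component of $\ComMat{n}{d}$. Their dimension is $\dim\mathcal{Z}+d$. Since $\mathcal{U}^{st}$ is open in $\mathcal{U}$, its irreducible components are precisely the nonempty intersections $(\mathcal{Z}\times\kk^d)\cap\mathcal{U}^{st}$, and each is dense in $\mathcal{Z}\times\kk^d$, so it has the same dimension. Such an intersection is nonempty exactly when some tuple in $\mathcal{Z}$ admits a cyclic vector. Moreover, distinct $\mathcal{Z}$ give distinct components, because each intersection has closure $\mathcal{Z}\times\kk^d$. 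This yields the second bijection.

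Composing the two steps, for corresponding components we get
$$\dim\mathcal{Z}^H+d^2=\dim\mathcal{Z}^C+d,$$
which is the claimed formula $\dim\mathcal{Z}^H=d-d^2+\dim\mathcal{Z}^C$. I expect the only genuine obstacle to be the irreducibility of $p_1^{-1}(Z)$ handled in the first step. Everything else is formal bookkeeping with open subsets and products.
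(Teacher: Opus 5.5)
Your proof is correct and follows the same route as the paper. The paper gives no separate argument: it gets the lemma from the fact that $p_1\colon\mathcal{U}^{st}\to\Hilb{d}{n}$ is a principal $\GL_d$-bundle, together with $\mathcal{U}^{st}$ being open in $\ComMat{n}{d}\times\kk^d$, which is exactly your two-step comparison, and your use of the openness of $p_1$ (rather than Zariski-local triviality) to get irreducibility of $p_1^{-1}(Z)$ correctly supplies the details left implicit there.
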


The above result on dimensions generalizes to other locally closed subsets of $\Hilb{d}{n}$. In particular, if we define
$$C_\h=p_2(p_1^{-1}(H_\h))\quad \mathrm{and}\quad \cC_\h=p_2(p_1^{-1}(\cH_\h))$$
for a Hilbert function $\h$, then we have
\begin{equation}\label{eq: dim Hilbert - matrices}
\dim C_\h=d^2-d+\dim H_\h\quad \mathrm{and}\quad \dim \cC_\h=d^2-d+\dim \cH_\h.
\end{equation}

There is a relation between the support of the algebra in $\Hilb{d}{n}$ and eigenvalues of the corresponding commuting matrices, see \cite[\S 3.3 - 3.4]{JS}. Let $\cA\in \Hilb{d}{n}$ and let $(A_1,\ldots ,A_n)$ be the corresponding $n$-tuple of commuting matrices. If the maximal ideal $(x_1-\lambda_1,\ldots ,x_n-\lambda_n)$ is in the support of $\cA$, then $\lambda_i$ is an eigenvalue of $A_i$ for each $i$. Moreover, the support of $\cA$ contains only one point if and only if every matrix $A_i$ has only one eigenvalue. In particular, algebras supported at the origin correspond to $n$-tuples of nilpotent commuting matrices. Consequently, the elementary components of $\Hilb{d}{n}$ correspond to components of $\ComMat{n}{d}$ containing only $n$-tuples of matrices with a single eigenvalue. On the other hand, the smoothable component of $\Hilb{d}{n}$ corresponds to the principal component of $\ComMat{n}{d}$, that is the closure of the locus of $n$-tuples of simultaneously diagonalizable matrices. We note that the dimension of the principal component of $\ComMat{n}{d}$ is $d^2-d+nd$, which is a consequence of Lemma \ref{lemma: bijection components of HS and commuting matrices}, but it also follows directly from simultaneous diagonalization of matrices, see \cite[Proposition 6]{GS} or \cite[\S 3.4]{JS}.

There is also a close relation between the Hilbert function of a local Artinian algebra supported at the origin and the dimensions of the common cokernels of products of corresponding nilpotent commuting matrices. Before we describe this correspondence we recall some basic facts about cokernels of matrices. The cokernel of a matrix $A$ is the set of all  row vectors $v^T$ satisfying $v^TA=0$. It is canonically isomorphic to the kernel of the transpose of $A$. Note also that the common cokernel of $d\times d$ matrices $A_1,\ldots ,A_n$ is equal to the cokernel of the $d\times nd$ matrix $\begin{bmatrix}A_1 & A_2 & \cdots & A_n\end{bmatrix}$, and its dimension is equal to the codimension of the image of $\begin{bmatrix}A_1 & A_2 & \cdots & A_n\end{bmatrix}$ in $\Bbbk^d$.

Let $(\cA,\mathfrak{n})$ be a local Artinian algebra supported at the origin, and $(A_1,\ldots ,A_n)\in \ComMat{n}{d}$ the corresponding $n$-tuple of nilpotent commuting matrices. If $j$ is any positive integer, then
\begin{equation}\label{eq: cokernels and Hilbert function}
\dim_{\kk}\cA/\mathfrak{n}^j=\dim_{\kk}\bigcap_{i_1,\ldots ,i_j}\mathrm{coker}\, (A_{i_1}\cdots A_{i_j}),
\end{equation}
where the intersection runs over all $j$-tuples of (not necessarily different) indices.  We refer to \cite[Lemma 3.19 and Equations (3.20)]{JS}. Since $\dim_\Bbbk\cA/\mathfrak{n}=1$, it follows from \eqref{eq: cokernels and Hilbert function} that an $n$-tuple of nilpotent commuting  matrices has a cyclic vector (i.e., it belongs to $\mathcal{U}^{st}$) if and only if the common cokernel of the matrices is 1-dimensional.

\section{Commuting matrices corresponding to 2-step ideals}

In the rest of the paper we will assume that $n=3$. In particular, $r_j={j+2\choose 2}$ and
$$\sum_{i=0}^jr_i={j+3\choose 3}$$
for each $j$. Given any Hilbert function $\h$, in Section 2 we defined the subvariety $C_\h$ of $\ComMat{3}{d}$ that corresponds to the locus $H_\h\subseteq \Hilb{d}{3}$. In this section, for each 2-step Hilbert function $\h$, we will define a smaller subvariety of $\ComMat{3}{d}$ that corresponds to $H_\h$.

Let $I$ be a 2-step ideal of $R$ of order $k>1$, let $\cA=R/I$, let $\mathfrak{n}$ be the unique maximal ideal of $\cA$ and let $\h_\cA=(r_0,r_1,\ldots ,r_{k-1},q_k,q_{k+1})$ be the Hilbert function of $\cA$. Recall that the commuting matrices $A_1,A_2,A_3$ that correspond to $\cA\in \Hilb{d}{3}$ represent multiplications with the variables on $\cA$ in some basis. The aim of this section is to find a nice basis of $\cA$. The image of a polynomial $f$ under the projection $R\to \cA$ will be denoted by $\overline{f}$. First note that $\mathfrak{n}^{k+2}=\{0\}$ and that vector spaces $\mathfrak{n}^j/\mathfrak{n}^{j+1}$ and $\m^j/\m^{j+1}$ are isomorphic for $j=0,1,\ldots ,k-1$.

We choose a basis of $\cA$ as follows. First we choose linearly independent elements $f_1,\ldots ,f_{q_{k+1}}\in \mathfrak{n}^{k+1}$. Then we choose elements $g_1,\ldots ,g_{q_k}\in \mathfrak{n}^k$ that are linearly independent modulo $\mathfrak{n}^{k+1}$. Finally, for each $j=k-1,k-2,\ldots ,1,0$ we choose all the monomials of degree $j$ and order them lexicographically. Then $(f_1,\ldots ,f_{q_{k+1}},g_1,\ldots ,g_{q_k},\overline{x_1^{k-1}},\overline{x_1^{k-2}x_2},\ldots ,\overline{x_1},\overline{x_2},\overline{x_3},\overline{1})$ is a basis of $\cA$, where the last ${k+2\choose 3}$ elements are ordered with respect to graded lexicographic order. In this ordered basis the matrices $A_i$ are of the block form
\begin{equation}\label{eq: matrices A_i}
A_i=
\begin{bmatrix}
0 & B_i & C_i\\
& 0 & D_i\\
& & 0 & E_i^{(k-1)}\\
& & & \! \! \! \! \! \! \! 0 & E_i^{(k-2)}\\
& & & & \!\!\!\!\!\!\!\!\ddots & \ddots\\
& & & & & 0 & E_i^{(1)}\\
& & & & & & 0
\end{bmatrix}
\end{equation}
with rows and columns of respective sizes $q_{k+1},q_k,r_{k-1},\ldots ,r_1,r_0$, where all the non-specified entries are zeros. Moreover, the matrices $E_i^{(j)}$ are independent of the ideal $I$ and they are inductively defined as follows. For $j=1$ we have
$$E_1^{(1)}=\begin{bmatrix}1 \\ 0 \\ 0\end{bmatrix}, \quad 
E_2^{(1)}=\begin{bmatrix}0 \\ 1 \\ 0\end{bmatrix}, \quad
E_3^{(1)}=\begin{bmatrix}0 \\ 0 \\ 1\end{bmatrix}.$$
For $j=2,\ldots ,k-1$ write the matrix $E_i^{(j)}$ in the block form $E_i^{(j)}=\begin{bmatrix} F_i^{(j)} \\ G_i^{(j)}\end{bmatrix}$ where blocks have $r_{j-1}$ resp. $r_j-r_{j-1}=j+1$ rows. Then $F_1^{(j)}=\II$, $G_1^{(j)}=0$, $F_i^{(j)}=\begin{bmatrix}E_i^{(j-1)} & 0 \end{bmatrix}$ for $i=2,3$ and
\begin{equation}\label{eq: G_i}
G_2^{(j)}=\begin{bmatrix}
0 & \cdots & 0 & 1\\
\vdots & & & \ddots & \ddots\\
0 & \cdots & & \cdots & 0 & 1\\
0 & \cdots & & & \cdots & 0
\end{bmatrix} \quad \mathrm{and}\quad G_3^{(j)}=\begin{bmatrix}
0 & \cdots & 0 & 0 & \cdots & 0\\
0 & \cdots & 0 & 1 & \ddots & \vdots \\
\vdots & &  & \ddots & \ddots & 0\\
0 & \cdots & & \cdots & 0 & 1
\end{bmatrix}.
\end{equation}

Since $(r_0,r_1,\ldots ,r_{k-1},q_k,q_{k+1})$ is the Hilbert function of $\cA$, equation \eqref{eq: cokernels and Hilbert function} implies that
\begin{equation}\label{eq: dim cokernels 1}
\dim_{\kk}\bigcap_{i_1,\ldots ,i_j}\mathrm{coker}\, (A_{i_1}\cdots A_{i_j})=r_0+r_1+\cdots +r_{j-1}\quad \mathrm{for}\, \, j=1,2,\ldots ,k
\end{equation}
and
\begin{equation}\label{eq: dim cokernels 2}
\dim_{\kk}\bigcap_{i_1,\ldots ,i_{k+1}}\mathrm{coker}\, (A_{i_1}\cdots A_{i_{k+1}})=r_0+r_1+\cdots +r_{k-1}+q_k.
\end{equation}
Recall from section 2 that the common cokernel of $A_1,A_2,A_3$ is 1-dimensional. Therefore, using the block structure of the matrices $A_i$ and the equation \eqref{eq: dim cokernels 1} for $j=1$ we get that the cokernels of $\begin{bmatrix}B_1 & B_2 & B_3\end{bmatrix}$ and of $\begin{bmatrix}D_1 & D_2 & D_3\end{bmatrix}$ are trivial. Note that the cokernel of $\begin{bmatrix}E_1^{(j)} & E_2^{(j)} & E_3^{(j)}\end{bmatrix}$ is trivial for $j=1,\ldots ,k-1$ by the definition of the matrices $E_i^{(j)}$. Moreover, the vector $e_d:=\begin{bmatrix} 0 \\ \vdots \\ 0 \\ 1\end{bmatrix}$ does not belong to the linear span of the columns of the matrix $\begin{bmatrix} A_1 & A_2 & A_3\end{bmatrix}$ and it is cyclic vector for the triple $(A_1,A_2,A_3)$.

We note two more things. First, the ideal $I$ is homogeneous if and only if $C_1=C_2=C_3=0$. Second, when $I$ is a 1-step ideal of order $k$, we get matrices of the same shape as \eqref{eq: matrices A_i}, but without the first block row and column.

Given a 2-step Hilbert function $\h$, we now define $\tC_\h$ to be the set of all triples of commuting matrices $(A_1,A_2,A_3)$ of the shape \eqref{eq: matrices A_i} with $\mathrm{coker}\, \begin{bmatrix}B_1 & B_2 & B_3\end{bmatrix}=\{0\}$ and $\mathrm{coker}\, \begin{bmatrix}D_1 & D_2 & D_3\end{bmatrix}=\{0\}$. Furthermore, let $\tcC_\h$ be the subset of $\tC_\h$ consisting of all triples with $C_1=C_2=C_3=0$. We note that the above definition defines $\tC_\h$ also for 1-step Hilbert functions if we delete the first block row and column in \eqref{eq: matrices A_i}.

The group $\GL_d$ acts on $C_\h$ by simultaneous conjugation of matrices. For each 2-step Hilbert function $\h$ of order $k>1$ we proved above that each algebra $\cA\in H_\h$ corresponds to a triple from $\tC_\h$. The converse is also true: If $(A_1,A_2,A_3)\in \tC_\h$, then the common cokernel of $A_1, A_2 ,A_3$ is 1-dimensional, since the common cokernels of $B_1,B_2,B_3$ and of $D_1,D_2,D_3$ are trivial. Therefore, the triple $(A_1,A_2,A_3)$ has a cyclic vector, so it corresponds to an algebra $\cA\in \Hilb{d}{3}$. Using the block structure of the matrices $A_i$ and the conditions on the common cokernels of $B_1,B_2,B_3$ and of $D_1,D_2,D_3$ it is easy to show (for example, by induction on $j$) that the equalities \eqref{eq: dim cokernels 1} and \eqref{eq: dim cokernels 2} are satisfied. The equation \eqref{eq: cokernels and Hilbert function} then implies that the Hilbert function of $\cA$ is $(r_0,r_1,\ldots ,r_{k-1},q_k,q_{k+1})$, i.e. $\cA\in H_\h$. We get the following.

\begin{lemma}
$C_\h$ and $\cC_\h$ are $\GL_d$-orbits of $\tC_\h$ and $\tcC_\h$, respectively. 
\end{lemma}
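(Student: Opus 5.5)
We prove both inclusions, i.e. $C_\h=\GL_d\cdot\tC_\h$ and $\cC_\h=\GL_d\cdot\tcC_\h$, where $\GL_d$ acts by simultaneous conjugation.

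\emph{First step: $\GL_d\cdot\tC_\h\subseteq C_\h$.} Since $p_2$ is $\GL_d$-equivariant and $H_\h$ is a set of algebras (so $p_1^{-1}(H_\h)$ is $\GL_d$-stable), $C_\h$ is $\GL_d$-stable. It therefore suffices to check $\tC_\h\subseteq C_\h$. Take $(A_1,A_2,A_3)\in\tC_\h$. The matrices are strictly block upper triangular, hence nilpotent. The argument preceding the lemma shows that $e_d$ is a cyclic vector, so $(A_1,A_2,A_3,e_d)\in\mathcal{U}^{st}$. We then verify \eqref{eq: dim cokernels 1} and \eqref{eq: dim cokernels 2} by induction on $j$ as follows.
\begin{itemize}
\item Because of the block shape, a row vector in the common cokernel of all products of length $j$ must vanish on the first block columns.
\item The surjectivity conditions $\mathrm{coker}\begin{bmatrix}B_1&B_2&B_3\end{bmatrix}=0$, $\mathrm{coker}\begin{bmatrix}D_1&D_2&D_3\end{bmatrix}=0$, and the analogous condition for the $E^{(j)}$ force the common cokernel of products of length $j$ to be exactly the row vectors supported on the last $r_0+\cdots+r_{j-1}$ coordinates for $j\le k$. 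For $j=k+1$ the support is the last $r_0+\cdots+r_{k-1}+q_k$ coordinates.
\item The key computation is that $\begin{bmatrix}A_{i_1}\cdots A_{i_j}\end{bmatrix}$ over all index tuples has image equal to the span of the first $d-\sum_{i<j}r_i$ coordinate vectors. This follows because a product of $j$ shifts composes the surjective blocks.
\end{itemize}
Applying \eqref{eq: cokernels and Hilbert function} to $\cA:=p_1(A_1,A_2,A_3,e_d)$, which is supported at the origin since the $A_i$ are nilpotent, shows that $\h_\cA=\h$. Hence $\cA\in H_\h$ and $(A_1,A_2,A_3)\in C_\h$.

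\emph{Second step: $C_\h\subseteq\GL_d\cdot\tC_\h$.} Let $(A_1,A_2,A_3)\in C_\h$, so there is $v$ with $(A_1,A_2,A_3,v)\in\mathcal{U}^{st}$ mapping to some $\cA=R/I\in H_\h$. Then $\Bbbk^d\cong\cA$ as $R$-modules via $f\mapsto f(A)v$, and $A_i$ is the matrix of multiplication by $x_i$ in the basis given by the images of the standard basis. Choose the special basis of $\cA$ constructed above (the $f$'s, then the $g$'s, then the monomials). Let $P$ be the change of basis matrix. Then $P^{-1}A_iP$ is the multiplication matrix in that basis, which has the shape \eqref{eq: matrices A_i} and satisfies the two cokernel conditions, as shown above. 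Thus $P^{-1}(A_1,A_2,A_3)P\in\tC_\h$.

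\emph{Homogeneous case.} The same argument applies, using that $I$ is homogeneous if and only if $C_1=C_2=C_3=0$. For the forward direction, if $I$ is homogeneous we choose the $f$'s and $g$'s homogeneous, of degrees $k+1$ and $k$; then $x_i g_l$ lies in degree $k+1$, and $x_i$ times a monomial of degree $k-1$ lies in degree $k$, so the $C$-blocks vanish. For the converse, vanishing of the $C_i$ makes the algebra graded with respect to the block filtration, so the corresponding ideal is homogeneous.

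The main obstacle is the inductive cokernel computation in the first step, in particular at the $(k+1)$-st level, where the $C_i$ could a priori interfere. Nilpotency of degree $k+2$ and the block triangularity of the matrices should show that the $C_i$ only affect the $q_{k+1}$ rows, which are already in the image.
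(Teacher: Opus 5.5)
Your proposal is correct and follows essentially the same route as the paper: $C_\h\subseteq\GL_d\cdot\tC_\h$ by passing to the special basis of $\cA$, and the reverse inclusion by showing inductively that the images of the length-$j$ products fill exactly the first $d-\sum_{i<j}r_i$ coordinates (resp.\ the first $q_{k+1}$ coordinates for $j=k+1$) and then invoking \eqref{eq: cokernels and Hilbert function}, with the homogeneous case handled via the vanishing of the $C_i$ (your choice of homogeneous $f$'s and $g$'s makes that step more precise than the paper's remark). The worry in your last paragraph is harmless: the $C_i$ only contribute to the first block row, which already lies in every relevant image because $\begin{bmatrix}B_1&B_2&B_3\end{bmatrix}$ has trivial cokernel, and at level $j=k+1$ the $C_i$ do not enter at all.
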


\section{The stabilizer of $\tC_\h$}

Throughout this section let $\h=(r_0,r_1,\ldots ,r_{k-1},q_k,q_{k+1})$ be a 2-step Hilbert function of order $k>1$. (Note that $q_{k+1}$ may be zero, in the case when $\h$ is a 1-step Hilbert function.) Since $C_\h$ is the $\GL_d$-orbit of $\tC_\h$, to relate the dimensions of $C_\h$ and $\tC_\h$, we have to analyse the stabilizer of $\tC_\h$ in $\GL_d$. However, the stabilizer has a complicated structure due to the fact that for every triple $(A_1,A_2,A_3)\in \tC_\h$ we have to take into account the invertible matrices from the common centralizer of $A_1$, $A_2$ and $A_3$. Therefore we consider the action of a smaller group.

Let $G$ be the group of all $d\times d$ invertible matrices with the last column equal to $e_d$.

\begin{prop}\label{prop: G-orbit dense}
The $G$-orbit of $\tC_\h$ is dense in $C_\h$.
\end{prop}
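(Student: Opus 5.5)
We aim to show that every triple in a dense open subset of $C_\h$ is $G$-conjugate to a triple in $\tC_\h$. The natural candidate is the set of triples for which $e_d$ is a cyclic vector. Since $C_\h=p_2(p_1^{-1}(H_\h))$ is the $\GL_d$-orbit of $\tC_\h$, every $(A_1,A_2,A_3)\in C_\h$ has the form $P(T_1,T_2,T_3)P^{-1}$ for some $(T_1,T_2,T_3)\in\tC_\h$ and $P\in\GL_d$. For such triples $e_d$ is cyclic for $(T_1,T_2,T_3)$, so $Pe_d$ is cyclic for $(A_1,A_2,A_3)$; every triple in $C_\h$ therefore has a cyclic vector.

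\textbf{Step 1: the open set.} Consider
$$U=\{(A_1,A_2,A_3)\in C_\h;\ e_d \text{ is a cyclic vector of } (A_1,A_2,A_3)\}.$$
Cyclicity of $e_d$ means that the vectors $f(A_1,A_2,A_3)e_d$, with $f$ running over monomials of degree at most $d-1$, span $\kk^d$. This is a nonvanishing condition on some maximal minor, so $U$ is open in $C_\h$.

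\textbf{Step 2: density of $U$.} Density is the step that needs care, because $C_\h$ need not be irreducible; we must show that $U$ meets every irreducible component densely. Take any $A=(A_1,A_2,A_3)\in C_\h$ with a cyclic vector $v$. The set of cyclic vectors of $A$ is open and nonempty in $\kk^d$, and the set $\{P\in\GL_d;\ P^{-1}e_d \text{ is cyclic for } A\}$ is nonempty and open in $\GL_d$. Since $\GL_d$ is irreducible, this set is dense in $\GL_d$, so $A$ lies in the closure of $\{PAP^{-1}\}\cap U$. Hence every point of $C_\h$ is in the closure of $U$. This argument avoids any irreducibility assumption on $C_\h$, and $C_\h$ is $\GL_d$-invariant, so the conjugates stay inside $C_\h$.

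\textbf{Step 3: $U$ is contained in the $G$-orbit of $\tC_\h$.} Let $A\in U$ and write $A=PTP^{-1}$ with $T\in\tC_\h$. Then $w=P^{-1}e_d$ is cyclic for $T$, so $w=g(T)e_d$ for some polynomial $g$. The matrix $g(T)$ lies in the commutant $\kk[T_1,T_2,T_3]$, and it is invertible: if $g(T)$ were singular, then $\kk[T]w=g(T)\kk[T]e_d$ would be a proper subspace. Now set $Q=Pg(T)$. Since $g(T)$ commutes with each $T_i$, we have $QTQ^{-1}=PTP^{-1}=A$, and $Qe_d=Pw=e_d$, so $Q\in G$.

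Thus $U\subseteq G\cdot\tC_\h\subseteq C_\h$, and since $U$ is dense in $C_\h$, so is $G\cdot\tC_\h$. The main obstacle is Step 2, the density argument over a possibly reducible $C_\h$; Step 3 is routine once the commutant trick is in hand.
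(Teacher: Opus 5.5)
Your proof is correct and uses the same key idea as the paper: since $e_d$ is cyclic for $T\in\tC_\h$, some $g(T)\in\kk[T_1,T_2,T_3]$ sends $e_d$ to the required vector, and absorbing it into the conjugating matrix puts that matrix in $G$ (this is the paper's $R$ with $Q=R^{-1}P$). The only difference is how density is obtained: the paper notes that the surjective orbit map $\GL_d\times\tC_\h\to C_\h$ sends the dense set of pairs with $P_{dd}\neq 0$ into $G\cdot\tC_\h$, reading off invertibility of $R$ from strict upper triangularity, whereas you use the intrinsic open set $U$ where $e_d$ is cyclic, which together with the trivial reverse inclusion even shows that $G\cdot\tC_\h=U$ is open in $C_\h$.
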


\begin{proof}
We will show that for every $P\in \GL_d$ with a non-zero $(d,d)$-th entry and every triple $(A_1,A_2,A_3)\in \tC_\h$ there exists $Q\in G$ such that $P^{-1}A_iP=Q^{-1}A_iQ$ for $i=1,2,3$. Since $C_\h$ is the $\GL_d$-orbit of $\tC_\h$ and since the set of all matrices with a non-zero $(d,d)$-th entry is dense in $\GL_d$, this will prove the lemma.

Recall that the vector $e_d$ is a cyclic vector for the triple $(A_1,A_2,A_3)$. Therefore there exists a matrix $R$ in the algebra $\kk[A_1,A_2,A_3]$ whose last column is equal to the last column of $P$. Since $A_1$, $A_2$ and $A_3$ are strictly upper triangular, the matrix $R$ is invertible if and only if the $(d,d)$-th entry of $P$ is nonzero. In such case define $Q=R^{-1}P$. Since the last columns of $P$ and $R$ are equal, $Q$ belongs to $G$. Moreover, the fact that $R=PQ^{-1}$ belongs to the common centralizer of $A_1$, $A_2$ and $A_3$ implies $P^{-1}A_iP=Q^{-1}A_iQ$ for $i=1,2,3$.
\end{proof}

The above proposition immediately implies the following.

\begin{cor}\label{cor: dimension of G-orbit}
The dimension of $C_\h$ is equal to the dimension of the $G$-orbit of $\tC_\h$.
\end{cor}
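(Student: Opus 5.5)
The corollary should follow from Proposition \ref{prop: G-orbit dense} together with a few basic facts about constructible sets. Write $G\cdot\tC_\h$ for the $G$-orbit of $\tC_\h$, that is, the set of all triples $(Q^{-1}A_1Q,Q^{-1}A_2Q,Q^{-1}A_3Q)$ with $Q\in G$ and $(A_1,A_2,A_3)\in\tC_\h$.

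The first step is the inclusion $G\cdot\tC_\h\subseteq C_\h$. This holds because $G\subseteq\GL_d$ and, by the lemma at the end of Section 3, $C_\h$ is the $\GL_d$-orbit of $\tC_\h$.

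The second step is to check that both sets are constructible, so that their dimensions are well defined and equal the dimensions of their closures. The set $\tC_\h$ is locally closed in $\mathbb{M}_d^3$. Indeed, the commutation relations and the prescribed block shape \eqref{eq: matrices A_i} are closed conditions. The triviality of the cokernels of $\begin{bmatrix}B_1 & B_2 & B_3\end{bmatrix}$ and $\begin{bmatrix}D_1 & D_2 & D_3\end{bmatrix}$ is an open condition, since it says these matrices have full row rank. Now $G\cdot\tC_\h$ is the image of $G\times\tC_\h$ under the morphism $(Q,(A_i))\mapsto(Q^{-1}A_iQ)$, and $C_\h$ is the image of $\GL_d\times\tC_\h$ under the analogous morphism. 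Both are therefore constructible by Chevalley's theorem.

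The third step concludes. Proposition \ref{prop: G-orbit dense} says that $G\cdot\tC_\h$ is dense in $C_\h$. Combined with the first step, this gives
$$\overline{G\cdot\tC_\h}=\overline{C_\h}.$$
A constructible set has the same dimension as its closure, so $\dim G\cdot\tC_\h=\dim C_\h$.

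There is no real obstacle here. The only point needing care is the second step: one should state explicitly that these orbit sets are constructible, so that "dense subset" yields equality of dimensions rather than just an inequality.
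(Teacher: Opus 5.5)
Your proof is correct and follows the paper's route. The paper only says that the corollary follows immediately from Proposition~\ref{prop: G-orbit dense}; you supply the routine details it leaves implicit, namely the inclusion $G\cdot\tC_\h\subseteq C_\h$, constructibility of both sets via Chevalley's theorem, and the fact that a constructible set has the same dimension as its closure.
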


We can now relate the dimension of $C_\h$ to that of $\tC_\h$.

\begin{prop}\label{prop: dim of stabilizer}
Let $\h=(r_0,r_1,\ldots ,r_{k-1},q_k,q_{k+1})$ be a 2-step Hilbert function of order $k>1$ and $d={k+2\choose 3}+q_k+q_{k+1}$. Then
$$\dim C_\h=\dim \tC_\h+d^2-d-q_{k+1}^2-q_kq_{k+1}-q_k^2.$$
\end{prop}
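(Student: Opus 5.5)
The plan is to compute the dimension of the $G$-orbit of $\tC_\h$ from the morphism
$$\Phi\colon G\times \tC_\h\to C_\h,\qquad (Q,(A_1,A_2,A_3))\mapsto (Q^{-1}A_1Q,Q^{-1}A_2Q,Q^{-1}A_3Q),$$
and then apply Corollary \ref{cor: dimension of G-orbit}. The image of $\Phi$ is the $G$-orbit of $\tC_\h$. Moreover $\dim G=d^2-d$, since $G$ is an open subset of the affine space of matrices whose last column is $e_d$. By the theorem on dimensions of fibres it therefore suffices to show that every nonempty fibre of $\Phi$ has dimension $q_{k+1}^2+q_kq_{k+1}+q_k^2$. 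Then
$$\dim C_\h=(d^2-d)+\dim\tC_\h-(q_{k+1}^2+q_kq_{k+1}+q_k^2).$$

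Let $H\subseteq G$ be the subgroup of invertible matrices of block form
$$\begin{bmatrix}P_{11}&P_{12}&0\\0&P_{22}&0\\0&0&\II\end{bmatrix},$$
with blocks of sizes $q_{k+1},q_k,\binom{k+2}{3}$. Its dimension is $q_{k+1}^2+q_{k+1}q_k+q_k^2$. Conceptually, $H$ records the freedom in choosing the basis elements $f_i$ and $g_i$ in Section 3, while the monomial part of the basis is forced.

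The key step is the following claim: for $A=(A_1,A_2,A_3)\in\tC_\h$ and $Q\in G$, the triple $Q^{-1}AQ$ lies in $\tC_\h$ if and only if $Q\in H$. Granting it, the fibre of $\Phi$ over $\Phi(Q_0,A)$ is $\{(hQ_0,\,h A h^{-1}):h\in H\}\cong H$, where we write $hAh^{-1}$ for simultaneous conjugation of the triple. This gives the required fibre dimension.

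For the ``if'' direction, I will conjugate explicitly. Conjugation by $h\in H$ leaves the $E_i^{(j)}$ blocks untouched and keeps the block shape \eqref{eq: matrices A_i}. The new blocks are
$$B_i'=P_{11}^{-1}B_iP_{22},\qquad D_i'=P_{22}^{-1}D_i,\qquad C_i'=P_{11}^{-1}\bigl(C_i-P_{12}P_{22}^{-1}D_i\bigr)$$
(up to routine bookkeeping), so the two cokernel conditions are preserved.

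For the ``only if'' direction, let $A'=Q^{-1}AQ$, so that $AQ=QA'$. Since $Qe_d=e_d$, we get $Q\,m(A')e_d=m(A)e_d$ for every monomial $m$. For $\deg m\le k-1$, the identical $E^{(j)}$ blocks show that $m(A)e_d=m(A')e_d$ is the standard basis vector indexed by $m$. Hence the last $\binom{k+2}{3}$ columns of $Q$ are the corresponding standard vectors.

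Next, the surjectivity conditions (trivial cokernels of $[B_1\ B_2\ B_3]$ and $[D_1\ D_2\ D_3]$) imply two facts, valid for both $A$ and $A'$. First, the span of $\{m(A)e_d:\deg m=k\}$ together with the first block is exactly the span of the first $q_{k+1}+q_k$ coordinate vectors. Second, the span of $\{m(A)e_d:\deg m=k+1\}$ is exactly the span of the first $q_{k+1}$ coordinate vectors. Therefore $Q$ preserves these two coordinate subspaces. This forces the zero pattern in the first two block columns, so $Q\in H$.

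I expect the main obstacle to be this ``only if'' step, specifically verifying carefully that the degree $k$ and degree $k+1$ monomial images span precisely those coordinate subspaces. This uses the inductive structure of the $E_i^{(j)}$ together with the trivial-cokernel conditions, by an induction on degree like the one in Section 3. The remaining steps are bookkeeping plus the fibre dimension theorem applied on the irreducible components of $\tC_\h$.
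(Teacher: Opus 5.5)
Your proof is correct. Its skeleton is the paper's: the same map $G\times\tC_\h\to C_\h$, dominance via Corollary~\ref{cor: dimension of G-orbit}, and fibres isomorphic to the group of matrices of the form \eqref{eq: final Q}. You differ in how you pin down $Q$.

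\textbf{The paper's argument.} It works on the row side. Right multiplication by $Q$ carries the common cokernels of the products $A'_{i_1}\cdots A'_{i_j}$ onto those of the $A_i$. By \eqref{eq: dim cokernels 1}--\eqref{eq: dim cokernels 2}, these are exactly the row vectors supported on the last $j$ blocks, which gives block upper triangularity. An induction on $j$ then uses $QA_i=A_i'Q$ and the trivial common cokernel of $E_1^{(j+1)},E_2^{(j+1)},E_3^{(j+1)}$ to force $Q_{j,j}=\II$ and $Q_{j',j}=0$ for $j\le k-1$.

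\textbf{Your argument.} You work on the column side, through the cyclic vector. From $Qe_d=e_d$ and $Q\,m(A')e_d=m(A)e_d$ you get $Qe_m=e_m$ for $\deg m\le k-1$ at once, with no induction. You also get that $Q$ preserves the images of $\mathfrak n^{k+1}$ and $\mathfrak n^{k}$.

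The step you flagged as the main obstacle is actually short:
\begin{itemize}
\item For $\deg m=k$, the vectors $m(A)e_d=A_ie_{m'}$ are just the columns of $\begin{bmatrix}C_i\\ D_i\end{bmatrix}$.
\item For $\deg m=k+1$, they span $\sum_i B_i\bigl(\mathrm{im}\,[D_1\ D_2\ D_3]\bigr)=\mathrm{im}\,[B_1\ B_2\ B_3]$.
\end{itemize}
So full row rank of $[D_1\ D_2\ D_3]$ and $[B_1\ B_2\ B_3]$ identifies these spans with the coordinate subspaces directly.

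\textbf{What each route buys.} Yours does not depend on \eqref{eq: dim cokernels 1}--\eqref{eq: dim cokernels 2}, which the paper only sketches ``by induction''. You also make explicit the ``if'' direction and the exact torsor description of the fibres, which the paper dismisses as easy to see. The paper's version has the advantage of reusing the cokernel/Hilbert-function dictionary from Section~2.

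\textbf{Reducible $\tC_\h$.} For the fibre-dimension step, note that your $H$ is irreducible. Hence it preserves each irreducible component $T_\alpha$ of $\tC_\h$, so the fibres of $\Phi$ restricted to $G\times T_\alpha$ are still isomorphic to $H$.
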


\begin{proof}
We define a map
\begin{eqnarray*}
\phi\colon G\times \tC_\h&\to& C_ \h\\
(P,A_1,A_2,A_3)&\mapsto& (P^{-1}A_1P,P^{-1}A_2P,P^{-1}A_3P).
\end{eqnarray*}
The map is dominant by Proposition \ref{prop: G-orbit dense} and it is clear that $\dim G=d^2-d$, so we have to prove that the fibres have dimension $q_{k+1}^2+q_kq_{k+1}+q_k^2$.

Due to homogeneity of the group action it suffices to consider the fibres where $P=\II$. So, assume that $\phi(\II,A_1,A_2,A_3)=\phi(Q,A_1',A_2',A_3')$ for some $(A_1,A_2,A_3),(A_1',A_2',A_3')\in \tC_\h$ and some $Q\in G$. With respect to the block partition of \eqref{eq: matrices A_i} write $Q$ in the block form
$$Q=\begin{bmatrix}
Q_{k+1,k+1} & Q_{k+1,k} & \cdots & Q_{k+1,0}\\
Q_{k,k+1} & Q_{k,k} & \cdots & Q_{k,0}\\
\vdots & \vdots & \ddots & \vdots\\
Q_{0,k+1} & Q_{0,k} & \cdots & Q_{0,0}
\end{bmatrix}.$$
(The indices in the above matrix may look unusual, but they  are labelled in such a way that $Q_{i,j}$ has $r_i$ resp. $q_i$ rows and $r_j$ resp. $q_j$ columns.)
First note that by the definition of $G$ we have $Q_{0,0}=1$ and $Q_{j,0}=0$ for $j=1,\ldots ,k+1$.

To get further conditions on $Q$, consider the common cokernels of the matrices $A_i$ and $A_i'$. Since $A_i=Q^{-1}A_i'Q$ for $i=1,2,3$, for all $j=1,\ldots ,k+1$ we get
\begin{equation}\label{eq: Q cokernels}
\bigcap_{i_1,\ldots ,i_j}\mathrm{coker}\, (A_{i_1}\cdots A_{i_j})=\left(\bigcap_{i_1,\ldots ,i_j}\mathrm{coker}\, (A_{i_1}'\cdots A_{i_j}')\right)\cdot Q
\end{equation}
where the intersections run over all $j$-tuples of indices. By the shape of the matrices $A_i$ and $A_i'$ both the above intersections of cokernels contain all block row vectors having arbitrary last $j$ blocks and zeros elsewhere. Moreover, using \eqref{eq: dim cokernels 1}, if $j\le k$, or \eqref{eq: dim cokernels 2}, if $j=k+1$, both the above intersections consist of exactly such row vectors. It follows from \eqref{eq: Q cokernels} that multiplication with $Q$ (from the right) preserves these spaces for $j=1,\ldots ,k+1$, therefore $Q$ is block upper triangular, i.e., $Q_{i,j}=0$ if $i<j$.

The next step is to show that $Q_{j,j}=\II$ for $j=0,1,\ldots ,k-1$ and $Q_{j',j}=0$ if $0\le j\le k-1$ and $j<j'\le k+1$. We do this by induction on $j$. The case $j=0$ is done by the definition of $G$. To perform inductive step suppose that the claim holds for some $j\in\{0,1,\ldots ,k-2\}$. For all $i=1,2,3$ we have $QA_i=A_i'Q$, and by the inductive assumption in the $(k+2-j)$-th column we get equations
$$Q_{j+1,j+1}E_i^{(j+1)}=E_i^{(j+1)}\quad \mathrm{and}\quad Q_{j',j+1}E_i^{(j+1)}=0\quad \mathrm{for}\, \, j'=j+2,\ldots ,k+1,$$
for $i=1,2,3$. The common cokernel of $E_1^{(j+1)}$, $E_2^{(j+1)}$ and $E_3^{(j+1)}$ is trivial, therefore we get $Q_{j+1,j+1}=\II$ and $Q_{j',j+1}=0$ for $j'=j+2,\ldots ,k+1$, completing the inductive step.

We proved that
\begin{equation}\label{eq: final Q}
Q=\begin{bmatrix}
Q_{k+1,k+1} & Q_{k+1,k} & 0 & & \cdots & 0\\
0 & Q_{k,k} & 0 & & \cdots & 0\\
0 & 0 & \II & 0 & \cdots & 0\\
\vdots & \vdots & \ddots & \ddots & \ddots & \vdots \\
0 & 0 & \cdots & 0 & \II & 0\\
0 & 0 & \cdots & & 0 & 1
\end{bmatrix}.
\end{equation}
It is easy to see that such a matrix belongs to the stabilizer of $\tC_\h$, so the triple $(QA_1Q^{-1},QA_2Q^{-1},QA_3Q^{-1})$ indeed belongs to $\tC_\h$. It follows that the fibre $\phi^{-1}(\phi(\II,A_1,A_2,A_3))$ consists of all quadruples $(Q,QA_1Q^{-1},QA_2Q^{-1},QA_3Q^{-1})$ where $Q$ is invertible matrix of the form \eqref{eq: final Q}, and hence the dimension of each fibre is $q_{k+1}^2+q_kq_{k+1}+q_k^2$. This concludes the proof of the proposition.
\end{proof}

\begin{rmk}\label{rmk: dim of stabilizer 1-step}
The above proposition in the particular case when $q_{k+1}=0$ implies that if $\h=(r_0,r_1,\ldots ,r_{k-1},q_k)$ is a 1-step Hilbert function of order $k>1$ and $d={k+2\choose 3}+q_k$, then
$$\dim C_\h=\dim \tC_\h+d^2-d-q_k^2.$$
\end{rmk}

\section{Dimension of the locus of 1-step ideals}

If $\h=(r_0,r_1,\ldots ,r_{k-1},q_k)$ is a 1-step Hilbert function, then it is clear (and well known) that $H_\h$ is isomorphic to the Grassmannian $\mathrm{Gr}(h_k,r_k)$, so it is irreducible and of dimension $h_kq_k$. However, the statement about irreducibility and dimension follows also directly from Lemma \ref{lemma: commutativity 1 - vector spaces} below. This lemma will be needed also in the next section, therefore we also include the proof of irreducibility of $H_\h$ and compute its dimension.

Note that in the definition of $\tC_\h$ the matrices $E_i^{(j)}$ are fixed, and in the case when $\h$ is a 1-step Hilbert function the matrices $A_i$ of the shape \eqref{eq: matrices A_i} do not have the first block row and column, so in this case the set $\tC_\h$ is isomorphic to
\begin{equation}\label{eq: D_iE_j=D_jE_i}
\left\{(D_1,D_2,D_3)\in \mathbb{M}_{q_k\times r_{k-1}}^3;D_iE_j^{(k-1)}=D_jE_i^{(k-1)}\, \mathrm{for}\, i,j=1,2,3\, \mathrm{and}\, \mathrm{coker}\begin{bmatrix} D_1 & D_2 & D_3 \end{bmatrix} =\{0\}\right\}.
\end{equation}
In this and the next section, to simplify the notation, we denote $E_i:=E_i^{(k-1)}$, $F_i:=F_i^{(k-1)}$ and $G_i:=G_i^{(k-1)}$ for $i=1,2,3$. This will not make any confusion as the matrices $E_i^{(k-1)}$ are the only ones among $E_i^{(j)}$ imposing commuting conditions. Recall that $E_i=\begin{bmatrix} F_i \\ G_i \end{bmatrix}$ for each $i$, with blocks of respective sizes $r_{k-2}\times r_{k-2}$ and $k\times r_{k-2}$, and that $F_1=\II$ and $G_1=0$.

We first show the following.

\begin{lemma}\label{lemma: commutativity of E_i}
$E_2F_3=E_3F_2.$
\end{lemma}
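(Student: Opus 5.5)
We read the claim through the meaning of the matrices $E_i^{(j)}$. In the basis of monomials ordered by graded lex, $E_i^{(j)}$ is the matrix of multiplication by $x_i$ as a map $R_{j-1}\to R_j$. Its column indexed by a monomial $m$ of degree $j-1$ is the standard basis vector of $R_j$ indexed by $x_im$. This holds for $j=1$ by definition. For general $j$ it follows by induction from the recursive block description. The monomials of degree $j$ split into those divisible by $x_1$, which are $x_1\cdot R_{j-1}$ in the same order and give the $F$ block, and those in $x_2,x_3$ only. That gives $F_1=\II$ and $G_1=0$. For $i=2,3$, a monomial $m=x_1m'$ has $x_im=x_1(x_im')$, so the $F$-block column is the column of $E_i^{(j-1)}$ at $m'$. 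A monomial $m$ free of $x_1$ has $x_im$ free of $x_1$, which gives the zero block in $F_i$ and the shift matrices $G_2,G_3$ acting on the $j$ monomials $x_2^{a}x_3^{j-1-a}$. I would verify this identification carefully, since it is the only real content. In particular, I would check that the lex order places $x_2^{j-1}$ first among the $x_1$-free monomials, so that $G_2$ and $G_3$ act as the displayed shifts.

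Granting this, $F_i=F_i^{(k-1)}$ is the matrix of $x_i\colon R_{k-3}\to R_{k-2}$ composed with the identification of $R_{k-2}$ with $x_1R_{k-2}\subseteq R_{k-1}$. Equivalently, $F_i=\begin{bmatrix}E_i^{(k-2)} & 0\end{bmatrix}$, an $r_{k-2}\times r_{k-2}$ matrix whose first $r_{k-3}$ columns are $E_i^{(k-2)}$ and whose remaining columns are zero. Then
$$E_2F_3=\begin{bmatrix}E_2E_3^{(k-2)} & 0\end{bmatrix},\qquad E_3F_2=\begin{bmatrix}E_3E_2^{(k-2)} & 0\end{bmatrix},$$
where $E_i=E_i^{(k-1)}$. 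So it suffices to show $E_2^{(k-1)}E_3^{(k-2)}=E_3^{(k-1)}E_2^{(k-2)}$. This holds because both sides are the matrix of multiplication by $x_2x_3\colon R_{k-3}\to R_{k-1}$, and multiplication in $R$ is commutative. For $k=2$ the block $F_i$ is degenerate: $R_{k-3}=0$, so $F_i$ is the $1\times1$ zero matrix for $i=2,3$. Both sides are then zero and the claim is trivial.

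The main obstacle is the bookkeeping in the first step. One has to confirm that the recursive definition, with its particular placement of zeros in $F_i$ and the exact shift patterns in $G_2$ and $G_3$ from \eqref{eq: G_i}, really matches the graded lexicographic ordering of monomials. Once the matrices are identified with multiplication maps, the identity is just $x_2x_3=x_3x_2$.
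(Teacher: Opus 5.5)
Your proof is correct and is essentially the paper's argument. The paper views $F_i$ as the map $x_1f+g\mapsto x_if$ on $R_{k-2}$ and $E_i$ as multiplication by $x_i$, so both sides send $x_1f+g\mapsto x_2x_3f$; this is the same reduction you make via $F_i=\begin{bmatrix}E_i^{(k-2)} & 0\end{bmatrix}$ to $x_2x_3=x_3x_2$. The paper takes the identification of the $E_i^{(j)}$ with multiplication maps as given from their construction in Section 3, so your inductive check of the graded lex bookkeeping and your separate treatment of $k=2$ are extra care rather than missing steps.
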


\begin{proof}
For $i=2,3$ the matrix $F_i$ represents the linear map
\begin{eqnarray*}
R_{k-2}&\to&R_{k-2}\\
x_1f(x_1,x_2,x_3)+g(x_2,x_3)&\mapsto&x_if(x_1,x_2,x_3).
\end{eqnarray*}
Since $E_i$ represents multiplication with $x_i$, the equality $E_2F_3=E_3F_2$ follows.
\end{proof}

The following lemma is the key technical result needed to compute $\dim \tC_ \h$ in the case of 1-step Hilbert function and the difference $\dim \tC_\h-\dim \tcC_\h$ in the case of 2-step Hilbert function. It is also one of the main tools needed to estimate $\dim \tcC_\h$ if $\h$ is a 2-step Hilbert function.

\begin{lemma}\label{lemma: commutativity 1 - vector spaces}
Let $t$ be a positive integer. Then the set
\begin{equation}\label{eq: linear system}
\{(X_1,X_2,X_3)\in \mathbb{M}_{t\times r_{k-1}}^3;X_iE_j=X_jE_i\, \mathrm{for}\, i,j=1,2,3\}
\end{equation}
is a vector space of dimension $tr_k$.
\end{lemma}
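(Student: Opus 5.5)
The conditions $X_iE_j=X_jE_i$ are linear in $(X_1,X_2,X_3)$, so the set \eqref{eq: linear system} is a vector space. Its defining equations act row by row: the $s$-th row of $X_iE_j-X_jE_i$ depends only on the $s$-th rows of $X_1,X_2,X_3$. Hence the space is the direct sum of $t$ copies of the case $t=1$, and it suffices to show that for $t=1$ the dimension is $r_k$.

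The plan is to interpret everything in terms of the polynomial ring.

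\begin{enumerate}
\item By construction, and as already used in the proof of Lemma \ref{lemma: commutativity of E_i}, $E_i=E_i^{(k-1)}$ is the matrix of multiplication by $x_i\colon R_{k-2}\to R_{k-1}$ in the monomial bases ordered graded-lexicographically. I will check this against the inductive definition via the blocks $F_i,G_i$.
\item A row vector $X_i$ of length $r_{k-1}$ is then a linear functional $\xi_i\in R_{k-1}^*$. The condition $X_iE_j=X_jE_i$ reads
$$\xi_i(x_jf)=\xi_j(x_if)\quad \text{for all } f\in R_{k-2} \text{ and all } i,j.$$
\item Define a linear map $\Phi\colon R_k^*\to (R_{k-1}^*)^3$ by $L\mapsto (L\circ x_1,L\circ x_2,L\circ x_3)$. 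Its image clearly satisfies the conditions, since $L(x_ix_jf)=L(x_jx_if)$.
\item $\Phi$ is injective, because $R_k=x_1R_{k-1}+x_2R_{k-1}+x_3R_{k-1}$.
\item It remains to show that every compatible triple lies in the image of $\Phi$. Then the solution space is isomorphic to $R_k^*$, which has dimension $r_k$.
\end{enumerate}

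For surjectivity I would dualize the degree-$k$ strand of the Koszul complex of $x_1,x_2,x_3$ on $R$:
$$R_{k-2}^{3}\xrightarrow{\ \partial_2\ } R_{k-1}^{3}\xrightarrow{\ \partial_1\ } R_k\to 0,$$
where $\partial_1(g_1,g_2,g_3)=\sum x_ig_i$ and $\partial_2$ sends the three components to the Koszul relations $(x_jf)e_i-(x_if)e_j$. Since $x_1,x_2,x_3$ is a regular sequence, this sequence is exact. Dualizing over $\Bbbk$ gives the exact sequence
$$0\to R_k^*\xrightarrow{\ \Phi\ }(R_{k-1}^*)^3\xrightarrow{\ \partial_2^*\ }(R_{k-2}^*)^3.$$
The kernel of $\partial_2^*$ is exactly the set of compatible triples from step 2, so exactness at $(R_{k-1}^*)^3$ is precisely the surjectivity claim.

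If one prefers to avoid quoting Koszul exactness, the fallback is a direct construction. Given a compatible triple, define $L$ on a monomial $m\in R_k$ by $L(m)=\xi_i(m/x_i)$ for any variable $x_i$ dividing $m$. Well-definedness follows from the compatibility condition applied to $f=m/(x_ix_j)$ whenever $x_i$ and $x_j$ both divide $m$.

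The main obstacle is not the algebra but the bookkeeping in step 1: confirming that the explicitly defined matrices $F_i^{(k-1)}$ and $G_i^{(k-1)}$ from \eqref{eq: G_i} assemble into the multiplication map $x_i\colon R_{k-2}\to R_{k-1}$ in the ordered monomial bases. This requires checking that $F_2,F_3$ handle monomials divisible by $x_1$ (via $E_i^{(k-2)}$ padded with zeros) and that $G_2,G_3$ shift the monomials in $x_2,x_3$ only. I would do this by induction on $j$.
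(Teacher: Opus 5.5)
Your proof is correct, but it takes a different route from the paper.

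The paper works directly with the explicit matrices. It splits $X_i=\begin{bmatrix}Y_i & Z_i\end{bmatrix}$ along $E_1=\begin{bmatrix}\II\\ 0\end{bmatrix}$ and uses $X_iE_1=X_1E_i$ to express $Y_2,Y_3$ through the free matrix $X_1$. It then invokes Lemma \ref{lemma: commutativity of E_i} ($E_2F_3=E_3F_2$) to reduce $X_2E_3=X_3E_2$ to $Z_2G_3=Z_3G_2$. From the shift shape of $G_2,G_3$ this system has a $t(k+1)$-dimensional solution space, giving $tr_{k-1}+t(k+1)=tr_k$.

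You instead identify the solution space intrinsically with $\mathrm{Hom}_{\kk}(R_k,\kk^t)$ via $L\mapsto(L\circ x_1,L\circ x_2,L\circ x_3)$. Injectivity comes from $R_k=\sum_i x_iR_{k-1}$. Surjectivity comes from Koszul exactness at $R_{k-1}^3$, or from your elementary fallback $L(m)=\xi_i(m/x_i)$. That fallback is complete as stated: well-definedness is exactly the compatibility condition at $f=m/(x_ix_j)$, and $L(x_im')=\xi_i(m')$ holds by choosing the divisor $x_i$.

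What your route buys:
\begin{enumerate}
\item It explains conceptually why the answer is $r_k$.
\item It works verbatim for any number of variables.
\item It gives the parametrization $X_i=LE_i^{(k)}$ with $L\in\mathbb{M}_{t\times r_k}$, where $E_i^{(k)}$ is the matrix of $x_i\colon R_{k-1}\to R_k$ built by the same recipe. This makes the claim in Corollary \ref{cor: dim 1-step matrices} transparent: $\begin{bmatrix}D_1 & D_2 & D_3\end{bmatrix}=L\begin{bmatrix}E_1^{(k)} & E_2^{(k)} & E_3^{(k)}\end{bmatrix}$, and the right factor has full row rank, so the cokernel is trivial exactly when $L$ has rank $q_k$.
\end{enumerate}
The paper's route is a self-contained matrix computation, needing only Lemma \ref{lemma: commutativity of E_i} and the explicit $G_i$.

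The step you flag as the main obstacle is not really one. The $E_i^{(j)}$ are by construction the degree blocks of multiplication by $x_i$ in the graded-lex monomial basis, and the paper itself uses this in Lemma \ref{lemma: commutativity of E_i}. So the inductive formulas need no separate verification.
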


\begin{proof}
We only have to compute the dimension. As above, write
$$E_1=\begin{bmatrix} \II \\  0
\end{bmatrix}\quad \mathrm{and}\quad E_i=\begin{bmatrix}
F_i \\ G_i
\end{bmatrix}\, \, \mathrm{for}\, i=2,3,$$
where the blocks have $r_{k-2}$ respectively $k$ rows. With respect to the same block partitions write $X_i=\begin{bmatrix}Y_i & Z_i\end{bmatrix}$ for $i=1,2,3$. Equations $X_iE_1=X_1E_i$ for $i=2,3$ are then equivalent to
$$Y_i=Y_1F_i+Z_1G_i.$$
Equation $X_2E_3=X_3E_2$ is then equivalent to
$$Y_1F_2F_3+Z_1G_2F_3+Z_2G_3=Y_1F_3F_2+Z_1G_3F_2+Z_3G_2,$$
which is by Lemma \ref{lemma: commutativity of E_i} equivalent to $Z_2G_3=Z_3G_2$. The matrices $G_2$ and $G_3$ are explicitly given in \eqref{eq: G_i}, and we see that the space of solutions of the equation $Z_2G_3=Z_3G_2$ has dimension $t(k+1)$. Since $X_1$ was arbitrary, the space of solutions of \eqref{eq: linear system} has dimension $tr_{k-1}+t(k+1)=tr_k.$
\end{proof}

\begin{cor}\label{cor: dim 1-step matrices}
Let $\h=(r_0,r_1,\ldots ,r_{k-1},q_k)$ be a 1-step Hilbert function. Then $\tC_\h$ is irreducible and of dimension $q_kr_k$.
\end{cor}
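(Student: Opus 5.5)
By the description \eqref{eq: D_iE_j=D_jE_i}, $\tC_\h$ is the open subset of the vector space from Lemma \ref{lemma: commutativity 1 - vector spaces} with $t=q_k$ cut out by $\mathrm{coker}\begin{bmatrix} D_1 & D_2 & D_3\end{bmatrix}=\{0\}$. That vector space has dimension $q_kr_k$. An open subset of an irreducible variety is irreducible of the same dimension, so the whole statement reduces to showing that this open subset is nonempty.

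For nonemptiness I will parametrize the solution space explicitly. By the proof of Lemma \ref{lemma: commutativity 1 - vector spaces}, a solution is determined freely by $X_1=D_1$ together with the solution $(Z_2,Z_3)$ of $Z_2G_3=Z_3G_2$. Conceptually, a triple $(D_1,D_2,D_3)$ in this space is the same as a linear map $\psi\colon R_k\to \kk^{q_k}$, with $D_i$ the matrix of $f\mapsto \psi(x_if)$ on $R_{k-1}$. Indeed, the space has dimension $q_kr_k$, and $X_iE_j=X_jE_i$ is exactly the symmetry needed for such $\psi$ to exist, because $E_i$ represents multiplication by $x_i$ from $R_{k-1}$ to $R_k$.

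The columns of $\begin{bmatrix} D_1 & D_2 & D_3\end{bmatrix}$ are then the images under $\psi$ of $x_iR_{k-1}$, and these span $R_k$. Hence the cokernel is trivial exactly when $\psi$ is surjective. Since $q_k\le r_k$, a surjective $\psi$ exists, for instance the quotient map $R_k\to R_k/W$ for a subspace $W$ of dimension $h_k$, written in a basis of the quotient. So the open subset is nonempty, and $\tC_\h$ is irreducible of dimension $q_kr_k$.

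The one step requiring care is the identification of the linear space with $\mathrm{Hom}(R_k,\kk^{q_k})$. I plan to justify it by the dimension count: the map $\psi\mapsto(D_1,D_2,D_3)$ is injective, because the $x_iR_{k-1}$ span $R_k$, and it lands in the solution space; since both spaces have dimension $q_kr_k$, it is an isomorphism. As a consistency check, Remark \ref{rmk: dim of stabilizer 1-step} together with \eqref{eq: dim Hilbert - matrices} gives $\dim H_\h=q_kr_k-q_k^2=q_kh_k$, which matches the dimension of the Grassmannian.
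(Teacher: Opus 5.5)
Your proof is correct and follows essentially the paper's route. In both, $\tC_\h$ is the open subset of the $q_kr_k$-dimensional solution space of Lemma \ref{lemma: commutativity 1 - vector spaces} cut out by full row rank of $\begin{bmatrix} D_1 & D_2 & D_3\end{bmatrix}$, and it is nonempty because $q_k<r_k$. The paper phrases this as each row lying in the $r_k$-dimensional solution space for $t=1$, which is just your $R_k^*$. So your identification with $\mathrm{Hom}(R_k,\kk^{q_k})$, with the cokernel condition becoming surjectivity of $\psi$, is a dual repackaging of the same observation.

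One small indexing slip: $E_i=E_i^{(k-1)}$ represents multiplication by $x_i$ from $R_{k-2}$ to $R_{k-1}$, not from $R_{k-1}$ to $R_k$. The symmetry you actually need is therefore $\psi(x_ix_jg)=\psi(x_jx_ig)$ for $g\in R_{k-2}$, which holds, so the argument is unaffected.
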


\begin{proof}
Lemma \ref{lemma: commutativity 1 - vector spaces} implies that the set of all triples $(D_1,D_2,D_3)\in \mathbb{M}_{q_k\times r_{k-1}}^3$ satisfying $D_iE_j=D_jE_i$ for all $i$ and $j$ is a vector space of dimension $q_kr_k$. By \eqref{eq: D_iE_j=D_jE_i}, we need to show that this vector space contains triples where the rows of the matrix $\begin{bmatrix}D_1 & D_2 & D_3\end{bmatrix}$ are linearly independent, see Section 2. By the proof of Lemma \ref{lemma: commutativity 1 - vector spaces} we see that the rows of $\begin{bmatrix}D_1 & D_2 & D_3\end{bmatrix}$ belong to a $r_k$-dimensional vector space. Since $q_k<r_k$, the corollary follows.
\end{proof}

The following corollary now immediately follows from Proposition \ref{prop: dim of stabilizer} and equations \eqref{eq: dim Hilbert - matrices}.

\begin{cor}\label{cor: dim 1-step orbit, ideals}
Let $\h=(r_0,r_1,\ldots ,r_{k-1},q_k)$ be a 1-step Hilbert function and $d={k+2\choose 3}+q_k$. Then we have the following.
\begin{enumerate}
\item
$C_\h$ is irreducible and of dimension $d^2-d+q_kh_k.$
\item
$H_\h$ is irreducible and of dimension $q_kh_k$.
\end{enumerate}
\end{cor}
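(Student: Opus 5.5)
Both parts will follow by combining Corollary \ref{cor: dim 1-step matrices}, Remark \ref{rmk: dim of stabilizer 1-step} and equation \eqref{eq: dim Hilbert - matrices}.

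For (1), take $\h$ to be 1-step with $d={k+2\choose 3}+q_k$. By Remark \ref{rmk: dim of stabilizer 1-step}, which is Proposition \ref{prop: dim of stabilizer} with $q_{k+1}=0$, we have $\dim C_\h=\dim \tC_\h+d^2-d-q_k^2$. Corollary \ref{cor: dim 1-step matrices} gives $\dim\tC_\h=q_kr_k$. Hence
$$\dim C_\h=d^2-d+q_kr_k-q_k^2=d^2-d+q_k(r_k-q_k)=d^2-d+q_kh_k.$$
For irreducibility, $C_\h$ contains the image of the morphism $\phi\colon G\times\tC_\h\to C_\h$ from the proof of Proposition \ref{prop: dim of stabilizer}, and by Proposition \ref{prop: G-orbit dense} this image is dense. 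Since $G$ is irreducible (an open subset of an affine space) and $\tC_\h$ is irreducible, $G\times\tC_\h$ is irreducible. So the closure of $C_\h$ is irreducible, and $C_\h$ itself is irreducible as it contains a dense irreducible subset.

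For (2), equation \eqref{eq: dim Hilbert - matrices} gives $\dim H_\h=\dim C_\h-d^2+d=q_kh_k$. For irreducibility, $C_\h=p_2(p_1^{-1}(H_\h))$, and $p_1$ is a principal $\GL_d$-bundle. I will check that $p_1^{-1}(H_\h)\to C_\h$ is surjective with fibres open subsets of $\kk^d$ (the cyclic vectors) and that the situation is $\GL_d$-equivariant. Then the irreducible components of $p_1^{-1}(H_\h)$ correspond to those of $C_\h$, and in turn to those of $H_\h$, because $\GL_d$ is connected. This is the argument of Lemma \ref{lemma: bijection components of HS and commuting matrices} applied to locally closed subsets.

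This transfer of irreducibility through the bundle is the only step needing any care. Alternatively, irreducibility of $H_\h$ follows from its identification with $\mathrm{Gr}(h_k,r_k)$ noted at the start of the section.
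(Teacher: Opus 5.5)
Your proposal is correct and is essentially the paper's argument. The paper obtains the corollary directly from Corollary~\ref{cor: dim 1-step matrices}, Proposition~\ref{prop: dim of stabilizer} with $q_{k+1}=0$, and equations~\eqref{eq: dim Hilbert - matrices}; your irreducibility arguments spell out what the paper leaves implicit, and for the transfer to $H_\h$ it is enough to note that $p_1(A,v)$ depends only on $A$. Hence $p_1^{-1}(H_\h)$ is a nonempty open subset of $C_\h\times\kk^d$, so it is irreducible, and $H_\h$ is its image under $p_1$.
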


\section{Dimension of the locus of 2-step ideals}

In this section let $\h=(r_0,r_1,\ldots ,r_{k-1},q_k,q_{k+1})$ be a 2-step Hilbert function. We will assume that $q_{k+1}\ge 1$, as the case $q_{k+1}=0$ was considered in the previous section. We start with the following observation that follows immediately from Lemma \ref{lemma: commutativity 1 - vector spaces}.

\begin{lemma}\label{lemma: vector bundle over homogeneous locus}
The set $\tC_\h$ is a vector bundle over $\tcC_ \h$ with fibres of dimension $q_{k+1}r_k$.
\end{lemma}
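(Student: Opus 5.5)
The plan is to view $\tC_\h$ as sitting over $\tcC_\h$ via the projection $\pi\colon (A_1,A_2,A_3)\mapsto (A_1^0,A_2^0,A_3^0)$. Here $A_i^0$ is obtained from $A_i$ by replacing the block $C_i$ with $0$ and keeping $B_i$, $D_i$ and all $E_i^{(j)}$. We first show that $\pi$ lands in $\tcC_\h$, and then that its fibres are all one and the same linear space $L$ of dimension $q_{k+1}r_k$. This gives $\tC_\h\cong \tcC_\h\times L$, a trivial vector bundle.

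\emph{Step 1: the commutation equations.} Multiply the block matrices \eqref{eq: matrices A_i}. The products of the fixed blocks $E_i^{(j)}E_{i'}^{(j-1)}$ commute automatically, since they represent multiplication by variables on $R$. The remaining conditions for $A_iA_j=A_jA_i$ therefore come from only two block positions. In block position (second row, third column) we get $D_iE_j=D_jE_i$, where $E_i:=E_i^{(k-1)}$. In block position (first row, third column) we get
$$B_iD_j+C_iE_j=B_jD_i+C_jE_i .$$
All other block entries of the products are either zero or the automatic $E$-relations.

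\emph{Step 2: $\pi$ maps into $\tcC_\h$.} We must show $B_iD_j=B_jD_i$, which is not a priori visible from the equation above. We use a torus degeneration. Let $P_t$ be the block diagonal matrix with scalar $t^{m}$ on the block of ``degree'' $m$, where the blocks have degrees $k+1,k,k-1,\dots,0$. Then in $tP_t^{-1}A_iP_t$ (with the appropriate sign of exponent) every block between adjacent degrees, namely $B_i,D_i,E_i^{(j)}$, is unchanged, while $C_i$, which jumps two degrees, is multiplied by $t$. Conjugation and scaling preserve commutativity, so the triple stays in $\ComMat{3}{d}$ for all $t\neq 0$. Since $\ComMat{3}{d}$ is closed, the limit $t\to 0$, which is $(A_1^0,A_2^0,A_3^0)$, commutes. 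The cokernel conditions involve only the $B_i$ and $D_i$, so they are preserved, and hence $\pi(\tC_\h)\subseteq\tcC_\h$. (Equivalently, $(A_i^0)$ are the multiplication matrices of $\mathrm{gr}\,\cA$ in the induced basis.)

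\emph{Step 3: the fibres.} Given a point of $\tcC_\h$, we have $B_iD_j=B_jD_i$. The equation from Step 1 then reduces to the homogeneous linear system $C_iE_j=C_jE_i$ for all $i,j$. This system does not depend on the base point. By Lemma \ref{lemma: commutativity 1 - vector spaces} with $t=q_{k+1}$, its solution space $L$ has dimension $q_{k+1}r_k$. Conversely, adding any $(C_1,C_2,C_3)\in L$ to a point of $\tcC_\h$ gives a commuting triple, and it satisfies the same cokernel conditions, so it lies in $\tC_\h$. Hence $(A^0,C)\mapsto A^0+C$ is an isomorphism $\tcC_\h\times L\to\tC_\h$ inverse to $(\pi,\,C)$.

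The only real obstacle is Step 2, namely seeing that the $C_i$ do not affect whether $(B,D)$ satisfies $B_iD_j=B_jD_i$. The scaling degeneration handles this cleanly, provided one checks that every block other than $C_i$ connects adjacent degrees and is therefore scaled by the same factor.
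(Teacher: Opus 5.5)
Your final decomposition $\tC_\h\cong\tcC_\h\times L$, with $L$ the solution space of Lemma~\ref{lemma: commutativity 1 - vector spaces} for $t=q_{k+1}$, is what the paper intends; it calls the lemma an immediate consequence of Lemma~\ref{lemma: commutativity 1 - vector spaces}. However, your Step 1 block computation is wrong, and that error is what creates the ``obstacle'' you then resolve in Step 2.

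\emph{Where Step 1 goes wrong.} In \eqref{eq: matrices A_i} the block sizes are $q_{k+1},q_k,r_{k-1},\dots$. The block $C_i$ sits in position $(1,3)$ and $E_j=E_j^{(k-1)}$ sits in position $(3,4)$. Hence $C_iE_j$ lands in block $(1,4)$ and is a $q_{k+1}\times r_{k-2}$ matrix. By contrast, $B_iD_j$ lands in block $(1,3)$ and is $q_{k+1}\times r_{k-1}$. So your equation $B_iD_j+C_iE_j=B_jD_i+C_jE_i$ adds matrices of different sizes. Your claim that every other block entry is zero or automatic is also false: block $(1,4)$ contributes the non-automatic condition $C_iE_j=C_jE_i$.

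\emph{The correct conditions.} Commutativity amounts to three decoupled systems, plus the automatic $E$-relations:
\begin{itemize}
\item $B_iD_j=B_jD_i$, from block $(1,3)$;
\item $C_iE_j=C_jE_i$, from block $(1,4)$;
\item $D_iE_j=D_jE_i$, from block $(2,4)$.
\end{itemize}

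\emph{Consequences for Steps 2 and 3.} As written, Step 3 derives the necessity of $C_iE_j=C_jE_i$ from the malformed equation, so your argument does not actually justify that direction. Once Step 1 is corrected, both directions are immediate:
\begin{itemize}
\item the conditions on $C$ do not involve $B$ or $D$;
\item the cokernel conditions do not involve $C$;
\item therefore $\tC_\h=\tcC_\h\times L$ exactly, and $\dim L=q_{k+1}r_k$ by Lemma~\ref{lemma: commutativity 1 - vector spaces}.
\end{itemize}
The torus degeneration in Step 2 is a valid argument in itself; it is the degeneration to $\mathrm{gr}\,\cA$. It is simply unnecessary, because $B_iD_j=B_jD_i$ can be read off directly from block $(1,3)$. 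With Step 1 fixed and Step 2 dropped, your proof becomes the paper's one-line argument.
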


Therefore, in the rest of this section, it suffices to consider the set $\tcC_\h$, whose $\GL_d$-orbit corresponds to the homogeneous locus $\cH_\h$.  Note that $\tcC_\h$ is isomorphic to the set of all 6-tuples $(B_1,B_2,B_3,D_1,D_2,D_3)\in \mathbb{M}_{q_{k+1}\times q_k}^3\times \mathbb{M}_{q_k\times r_{k-1}}^3$ satisfying
$$B_iD_j=B_jD_i\quad \mathrm{and}\quad D_iE_j=D_jE_i\, \, \mathrm{for}\, i,j=1,2,3,$$
and
$$\mathrm{coker}\begin{bmatrix} B_1 & B_2 & B_3 \end{bmatrix} =\{0\}\quad \mathrm{and}\quad \mathrm{coker}\begin{bmatrix} D_1 & D_2 & D_3 \end{bmatrix} =\{0\}.$$
We denote $\h'=(r_0,r_1,\ldots ,r_{k-1},q_k)$ and consider the projection
\begin{eqnarray*}
\pi\colon \tcC_\h&\to&\tC_{\h'}\\
(B_1,B_2,B_3,D_1,D_2,D_3)&\mapsto&(D_1,D_2,D_3).
\end{eqnarray*}

\begin{lemma}\label{lemma: fibre = cokernel}
The fibre $\pi^{-1}(D_1,D_2,D_3)$ consists of all 6-tuples where the rows of the matrix $\begin{bmatrix}B_1 & B_2 & B_3\end{bmatrix}$ are linearly independent and belong to the cokernel of the matrix
\begin{equation}\label{eq: matrix of the system for B_i}
\begin{bmatrix}
0 & D_3 & -D_2\\
-D_3 & 0 & D_1\\
D_2 & -D_1 & 0\\
\end{bmatrix}.
\end{equation}
In particular, the fibre is nonempty if and only if the dimension of the cokernel of \eqref{eq: matrix of the system for B_i} is at least $q_{k+1}$.
\end{lemma}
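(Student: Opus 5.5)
The fibre $\pi^{-1}(D_1,D_2,D_3)$ is, by definition of $\tcC_\h$, the set of all triples $(B_1,B_2,B_3)\in\mathbb{M}_{q_{k+1}\times q_k}^3$ satisfying two conditions: the commutation relations $B_iD_j=B_jD_i$ for all $i,j$, and $\mathrm{coker}\begin{bmatrix}B_1&B_2&B_3\end{bmatrix}=\{0\}$. The conditions $D_iE_j=D_jE_i$ and the cokernel condition on the $D_i$ hold automatically, because $(D_1,D_2,D_3)\in\tC_{\h'}$. The plan is to rewrite each of the two remaining conditions row by row.

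\emph{Commutation relations.} Since $B_iD_i=B_iD_i$ trivially and the relation for $(i,j)$ is equivalent to the one for $(j,i)$, only three equations remain:
$$B_2D_3-B_3D_2=0,\qquad B_3D_1-B_1D_3=0,\qquad B_1D_2-B_2D_1=0.$$
I will stack them horizontally as a single block equation
$$\begin{bmatrix}B_1&B_2&B_3\end{bmatrix}M=0,$$
where $M$ is the matrix \eqref{eq: matrix of the system for B_i}. The step needing care is checking signs column by column:
\begin{itemize}
\item the first block column of $M$ is $(0,-D_3,D_2)^T$, which gives $-B_2D_3+B_3D_2=0$;
\item the second block column is $(D_3,0,-D_1)^T$, which gives $B_1D_3-B_3D_1=0$;
\item the third block column is $(-D_2,D_1,0)^T$, which gives $-B_1D_2+B_2D_1=0$.
\end{itemize}
These are, up to sign, exactly the three relations above. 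Since the equation $\begin{bmatrix}B_1&B_2&B_3\end{bmatrix}M=0$ holds if and only if each row of $\begin{bmatrix}B_1&B_2&B_3\end{bmatrix}$ annihilates $M$ from the left, the commutation relations are equivalent to every row of $\begin{bmatrix}B_1&B_2&B_3\end{bmatrix}$ lying in $\mathrm{coker}\,M$.

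\emph{Cokernel condition.} The condition $\mathrm{coker}\begin{bmatrix}B_1&B_2&B_3\end{bmatrix}=\{0\}$ means that no nonzero row vector $v^T$ satisfies $v^T\begin{bmatrix}B_1&B_2&B_3\end{bmatrix}=0$. This is precisely linear independence of the $q_{k+1}$ rows of $\begin{bmatrix}B_1&B_2&B_3\end{bmatrix}$. Combining the two reformulations gives the description of the fibre claimed in the lemma.

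\emph{Nonemptiness.} By the description above, the fibre is nonempty if and only if the subspace $\mathrm{coker}\,M$ contains $q_{k+1}$ linearly independent vectors, i.e.\ if and only if $\dim\mathrm{coker}\,M\ge q_{k+1}$. For the converse direction, choose any $q_{k+1}$ linearly independent vectors in $\mathrm{coker}\,M$ and place them as the rows of $\begin{bmatrix}B_1&B_2&B_3\end{bmatrix}$; this produces an element of the fibre.

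None of these steps is a genuine obstacle. The only point requiring attention is the sign bookkeeping when matching the block columns of $M$ to the three commutation relations.
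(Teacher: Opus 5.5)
Your proof is correct and takes the same approach as the paper, which states only that the lemma follows immediately from the relations $B_iD_j=B_jD_i$ and the triviality of the common cokernel of $B_1,B_2,B_3$. You have spelled out the block-column sign check, the translation of the cokernel condition into linear independence of rows, and the construction that gives nonemptiness.
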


\begin{proof}
The lemma follows immediately from the equations $B_iD_j=B_jD_i$ and the fact that the common cokernel of $B_1,B_2,B_3$ is trivial.
\end{proof}

We now define a smaller matrix than \eqref{eq: matrix of the system for B_i} which will define an equivalent system of equations. For an arbitrary triple $(D_1,D_2,D_3)\in \tC_{\h'}$ write $D_i=\begin{bmatrix}D_i' & D_i''\end{bmatrix}$ for $i=1,2,3$, where the blocks are of respective sizes $q_k\times r_{k-2}$ and $q_k\times k$, and define
$$M_{D_1,D_2,D_3}=\begin{bmatrix}
0 & D_3 & -D_2\\
-D_3'' & 0 & D_1\\
D_2'' & -D_1 & 0
\end{bmatrix}.$$

\begin{lemma}\label{lemma: smaller matrix}
Let $(D_1,D_2,D_3)\in \tC_{\h'}$ be arbitrary. Then the matrices \eqref{eq: matrix of the system for B_i} and $M_{D_1,D_2,D_3}$ have the same cokernel.
\end{lemma}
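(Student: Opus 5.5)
The plan is to compare the two systems of linear equations directly. A row vector $(u_1,u_2,u_3)$ with blocks of length $q_k$ lies in the cokernel of \eqref{eq: matrix of the system for B_i} if and only if
$$u_3D_2-u_2D_3=0,\qquad u_1D_3-u_3D_1=0,\qquad u_2D_1-u_1D_2=0.$$
The matrix $M_{D_1,D_2,D_3}$ is obtained from \eqref{eq: matrix of the system for B_i} by deleting the first $r_{k-2}$ columns of the first block column. So $(u_1,u_2,u_3)\in\mathrm{coker}\, M_{D_1,D_2,D_3}$ if and only if the last two equations hold together with
$$u_3D_2''-u_2D_3''=0.$$
Deleting columns only removes equations, so the cokernel of \eqref{eq: matrix of the system for B_i} is contained in that of $M_{D_1,D_2,D_3}$.

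For the reverse inclusion, I take $(u_1,u_2,u_3)$ in the cokernel of $M_{D_1,D_2,D_3}$. Since $u_3D_2-u_2D_3=\begin{bmatrix}u_3D_2'-u_2D_3' & u_3D_2''-u_2D_3''\end{bmatrix}$ and the second block already vanishes, it only remains to show that
$$u_3D_2'-u_2D_3'=0.$$

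The key input is the commutation relation $D_iE_j=D_jE_i$ from \eqref{eq: D_iE_j=D_jE_i}, used with $j=1$. Because $E_1=\begin{bmatrix}\II\\0\end{bmatrix}$, we have $D_iE_1=D_i'$, and hence
$$D_i'=D_1E_i\qquad\text{for } i=2,3.$$
Substituting this and then using the equations $u_2D_1=u_1D_2$ and $u_3D_1=u_1D_3$ gives
$$u_3D_2'-u_2D_3'=u_3D_1E_2-u_2D_1E_3=u_1D_3E_2-u_1D_2E_3=u_1(D_3E_2-D_2E_3).$$
The right-hand side vanishes by the relation $D_3E_2=D_2E_3$. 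This gives equality of the two cokernels.

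The proof is therefore a short linear-algebra computation. The only step needing care is recognizing that the identity $D_i'=D_1E_i$ lets the first $r_{k-2}$ equations of the first block column be derived from the other two block columns. Beyond that, I only need to check that the block sizes and signs match the definition of $M_{D_1,D_2,D_3}$.
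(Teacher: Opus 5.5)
Your proof is correct and is essentially the paper's argument. The paper uses the same two identities, $D_i'=D_1E_i$ and $D_2E_3=D_3E_2$, to show by right multiplication with an invertible block matrix (that is, by column operations) that the first $r_{k-2}$ columns of \eqref{eq: matrix of the system for B_i} are combinations of its last two block columns; you verify the equivalent dual statement directly on row vectors of the cokernel.
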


\begin{proof}
As in the proof of Lemma \ref{lemma: commutativity 1 - vector spaces} we write $E_1=\begin{bmatrix}\II \\ 0 \end{bmatrix}$ and compute $D_i'=D_1E_i$ for $i=2,3$. We split the first block of \eqref{eq: matrix of the system for B_i} and compute
$$\begin{bmatrix}
0 & 0 & D_3 & -D_2\\
-D_3' & -D_3'' & 0 & D_1\\
D_2' & D_2'' & -D_1 & 0
\end{bmatrix}\cdot \begin{bmatrix} 
I & 0 & 0 & 0\\
0 & I & 0 & 0\\
E_2 & 0 & I & 0\\
E_3 & 0 & 0 & I
\end{bmatrix}=\begin{bmatrix}
0 & 0 & D_3 & -D_2\\
0 & -D_3'' & 0 & D_1\\
0 & D_2'' & -D_1 & 0
\end{bmatrix},$$
where we used $D_i'=D_1E_i$ for $i=2,3$ and $D_2E_3=D_3E_2$. The lemma now follows.
\end{proof}

We can now estimate the dimension of $\tcC_\h$.

\begin{thm}\label{thm: dim homogeneous C_h}
Let $\h=(r_0,r_1,\ldots ,r_{k-1},q_k,q_{k+1})$ be a 2-step Hilbert function. Then
$$\dim \tcC_\h\ge q_kr_k+q_{k+1}(r_{k+1}-3h_k).$$
\end{thm}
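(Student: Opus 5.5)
The plan is to estimate $\dim \tcC_\h$ through the projection $\pi\colon \tcC_\h\to \tC_{\h'}$, with $\h'=(r_0,\ldots ,r_{k-1},q_k)$. The image of $\pi$ is a determinantal locus, and its codimension can be bounded from above. The fibres are described explicitly by Lemmas \ref{lemma: fibre = cokernel} and \ref{lemma: smaller matrix}, and their dimension can be bounded from below. The theorem on dimensions of fibres then gives the estimate.

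\textbf{Fibres.} By Lemmas \ref{lemma: fibre = cokernel} and \ref{lemma: smaller matrix}, the fibre over $(D_1,D_2,D_3)$ is the set of $q_{k+1}\times 3q_k$ matrices $\begin{bmatrix}B_1 & B_2 & B_3\end{bmatrix}$ with linearly independent rows lying in $\mathrm{coker}\, M_{D_1,D_2,D_3}$. The matrix $M_D:=M_{D_1,D_2,D_3}$ has size $3q_k\times N$ with $N=k+2r_{k-1}$. Put $c(D)=3q_k-\mathrm{rank}\, M_D$. When $c(D)\ge q_{k+1}$, the fibre is a nonempty open subset of $\mathrm{Hom}(\kk^{q_{k+1}},\mathrm{coker}\, M_D)$. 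It is therefore irreducible of dimension $q_{k+1}c(D)\ge q_{k+1}^2$.

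\textbf{Image.} The image of $\pi$ is
$$Z=\{D\in \tC_{\h'};\ \mathrm{rank}\, M_D\le s\},\qquad s=3q_k-q_{k+1}.$$
Three facts are needed here.
\begin{enumerate}
\item $\tC_{\h'}$ is an open subset of the vector space from Lemma \ref{lemma: commutativity 1 - vector spaces}. It is irreducible of dimension $q_kr_k$ by Corollary \ref{cor: dim 1-step matrices}.
\item The map $D\mapsto M_D$ is linear.
\item $Z$ is the preimage of the generic determinantal variety of $3q_k\times N$ matrices of rank $\le s$. That variety has codimension $(3q_k-s)(N-s)$. By the standard fact that preimages of determinantal loci under morphisms from irreducible varieties have codimension at most the generic one (Eagon--Northcott / Krull), every irreducible component of $Z$, if $Z$ is nonempty, has codimension at most $q_{k+1}(N-3q_k+q_{k+1})$ in $\tC_{\h'}$.
\end{enumerate}

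\textbf{Nonemptiness.} $Z$ is nonempty provided $\tcC_\h\neq\emptyset$. This holds because $\h$ is a 2-step Hilbert function. Indeed, taking the associated graded algebra of a 2-step algebra with Hilbert function $\h$ gives a homogeneous 2-step algebra with the same Hilbert function. Hence $\cH_\h\ne\emptyset$, and by the analogue for $\tcC_\h$ of the lemma identifying $\cC_\h$ with the $\GL_d$-orbit of $\tcC_\h$, the set $\tcC_\h$ is nonempty. I expect this step, together with checking that the codimension bound applies to a component of $Z$ (not to something empty), to be the main point needing care.

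\textbf{Conclusion.} Choose a component $Z_0$ of $Z$. The preimage $\pi^{-1}(Z_0)$ has dimension at least
$$\dim Z_0+q_{k+1}^2\ \ge\ q_kr_k-q_{k+1}(N-3q_k+q_{k+1})+q_{k+1}^2=q_kr_k+q_{k+1}(3q_k-k-2r_{k-1}).$$
Since $r_{k-1}=k(k+1)/2$, we get $k+2r_{k-1}=k(k+2)$. On the other hand,
$$r_{k+1}-3r_k=\tfrac{(k+2)(k+3)}{2}-\tfrac{3(k+1)(k+2)}{2}=-k(k+2),$$
so $r_{k+1}-3h_k=3q_k-k(k+2)$. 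This gives exactly $\dim \tcC_\h\ge q_kr_k+q_{k+1}(r_{k+1}-3h_k)$.
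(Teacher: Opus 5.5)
\textbf{Gap: the determinantal codimension formula in your item 3 is applied outside its range of validity.} Your strategy is the paper's: the projection $\pi\colon\tcC_\h\to\tC_{\h'}$, fibres that are open subsets of $\mathrm{Hom}(\kk^{q_{k+1}},\mathrm{coker}\, M_D)$, a determinantal codimension bound for the image, and the fibre-dimension theorem. The problem is that you do not split into cases.

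The formula $(m-s)(n-s)$ for the codimension of $m\times n$ matrices of rank at most $s$ holds only for $s\le\min(m,n)$. Here $m=3q_k$, $n=N=k(k+2)$, and
$$s=3q_k-q_{k+1}=N+h_{k+1}-3h_k.$$
So $s>N$ exactly when $h_{k+1}>3h_k$. This really occurs: for $\h=(1,3,5,6)$ we have $N=8$ and $s=9$.

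In that regime the rank condition is vacuous, and item 3 fails:
\begin{itemize}
\item it asserts that every component of $Z$ has codimension at most the negative number $q_{k+1}(N-s)$;
\item the resulting bound $\dim Z_0\ge q_kr_k+q_{k+1}(s-N)$ exceeds $\dim\tC_{\h'}=q_kr_k$, so it is false.
\end{itemize}
Your final inequality comes out right only because the fibre bound $q_{k+1}^2$ is too weak by exactly the same amount $q_{k+1}(s-N)$. The chain of inequalities as written is not a valid proof. The nonemptiness issue you flagged is not the delicate point; this case distinction is.

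\textbf{The fix is the paper's case split.}
\begin{itemize}
\item If $h_{k+1}\ge 3h_k$ (so $s\ge N$), then $\mathrm{rank}\, M_D\le N\le s$ for every $D$. Hence $\pi$ is surjective and $Z=\tC_{\h'}$ has dimension $q_kr_k$. Moreover $c(D)\ge 3q_k-N$, so every fibre has dimension at least $q_{k+1}(3q_k-N)$. This gives directly
$$\dim\tcC_\h\ge q_kr_k+q_{k+1}\bigl(3q_k-k(k+2)\bigr)=q_kr_k+q_{k+1}(r_{k+1}-3h_k).$$
\item If $h_{k+1}<3h_k$, your argument is exactly the paper's second case and is correct.
\end{itemize}
Equivalently, you can keep a single argument by replacing $s$ with $s'=\min(s,N)$ throughout. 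Use the codimension bound $(3q_k-s')(N-s')$, which is valid since $\mathrm{rank}\, M_D\le s$ is the same condition as $\mathrm{rank}\, M_D\le s'$. Use the fibre bound $c(D)\ge 3q_k-s'$. Both choices of $s'$ then yield $q_kr_k+q_{k+1}(3q_k-N)$.
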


\begin{proof}
As above, let $\h'=(r_0,r_1,\ldots ,r_{k-1},q_k)$ and let $\pi\colon \tcC_\h\to \tC_{\h'}$ be the projection sending $(B_1,B_2,B_3,D_1,D_2,D_3)$ to $(D_1,D_2,D_3)$. We consider the fibres of this map. By Lemmas \ref{lemma: fibre = cokernel} and \ref{lemma: smaller matrix} the fibre $\pi^{-1}(D_1,D_2,D_3)$ is nonempty if and only if $M_{D_1,D_2,D_3}$ has at least $q_{k+1}$-dimensional cokernel, which holds if and only if
$$\mathrm{rank}\, M_{D_1,D_2,D_3}\le 3q_k-q_{k+1}=k(k+2)+h_{k+1}-3h_k.$$

We first consider the easy case when $h_{k+1}-3h_k\ge 0$. The authors of \cite{GGGL} refer to this case as the case "with no linear syzygies". The matrix $M_{D_1,D_2,D_3}$ has $2r_{k-1}+k=k(k+2)$ columns, so its rank is always at most $k(k+2)\le k(k+2)+h_{k+1}-3h_k$. Therefore the projection $\pi$ is surjective. Moreover, for each $(D_1,D_2,D_3)\in \tC_{\h'}$ we have
$$\dim \pi^{-1}(D_1,D_2,D_3)=q_{k+1}\cdot \dim \mathrm{coker}\, M_{D_1,D_2,D_3}\ge q_{k+1}(3q_k-k(k+2)),$$
so the theorem on dimensions of fibres (see e.g. \cite[Theorem 11.12]{Har}) together with Corollary \ref{cor: dim 1-step matrices} implies
$$\dim \tcC_\h\ge \dim \tC_{\h'}+q_{k+1}(3q_k-k(k+2))=q_kr_k+q_{k+1}(r_{k+1}-3h_k),$$
as required.

Now we consider the case when $h_{k+1}-3h_k<0$. In this case the image $\pi(\tcC_\h)$ is the intersection of the determinantal variety $\mathcal{Z}$ consisting of all triples $(D_1,D_2,D_3)\in \mathbb{M}_{q_k\times r_{k-1}}^3$ satisfying
$$D_iE_j=D_jE_i\, \mathrm{for}\, i,j=1,2,3\quad \mathrm{and}\quad \mathrm{rank}\, M_{D_1,D_2,D_3}\le k(k+2)+h_{k+1}-3h_k=3q_k-q_{k+1}$$
and the open set of all triples $(D_1,D_2,D_3)$ with $\mathrm{coker}\, \begin{bmatrix}D_1 & D_2 & D_3\end{bmatrix}=\{0\}$. Note that $\h$ is a Hilbert function of some (2-step) Artinian algebra, therefore $\tcC_\h$ is nonempty, hence the image $\pi(\tcC_\h)$ is some nonempty open subset of $\mathcal{Z}$. Using known bounds on codimensions of determinantal varieties (see e.g. \cite[Exercise 10.9]{Eis}) and Lemma \ref{lemma: commutativity 1 - vector spaces} we get that each irreducible component of $\mathcal{Z}$ has dimension at least
$$q_kr_k+q_{k+1}(h_{k+1}-3h_k).$$
Since $\pi(\tcC_\h)$ is nonempty and open in $\mathcal{Z}$, its closure is a union of some irreducible components of $\mathcal{Z}$, therefore
$$\dim \pi(\tcC_\h)\ge q_kr_k+q_{k+1}(h_{k+1}-3h_k).$$
Moreover, for each $(D_1,D_2,D_3)\in \pi(\tcC_\h)$ we have
$$\dim \pi^{-1}(D_1,D_2,D_3)=q_{k+1}\cdot \dim \mathrm{coker}\, M_{D_1,D_2,D_3}\ge q_{k+1}^2,$$
and the theorem follows from the theorem on dimensions of fibres.
\end{proof}

Now we can prove Theorem \ref{thm: main} from the introduction, here Corollary \ref{cor: main theorem}. Namely, Theorem \ref{thm: dim homogeneous C_h}, Lemma \ref{lemma: vector bundle over homogeneous locus}, Proposition \ref{prop: dim of stabilizer} and equations \eqref{eq: dim Hilbert - matrices} immediately imply the following.

\begin{cor}\label{cor: main theorem}
Let $\h=(r_0,r_1,\ldots ,r_{k-1},q_k,q_{k+1})$ be a 2-step Hilbert function and $d={k+2\choose 3}+q_k+q_{k+1}$. Then we have the following.
\begin{enumerate}
\item
$\dim \tC_\h\ge q_kr_k+q_{k+1}(r_k+r_{k+1}-3h_k),$
\item
$\dim C_\h\ge d ^2-d+q_kh_k+q_{k+1}(h_{k+1}-2h_k),$
\item
$\dim H_\h\ge q_kh_k+q_{k+1}(h_{k+1}-2h_k).$
\end{enumerate}
\end{cor}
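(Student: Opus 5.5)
The three inequalities follow in order by pure bookkeeping from Theorem~\ref{thm: dim homogeneous C_h}, Lemma~\ref{lemma: vector bundle over homogeneous locus}, Proposition~\ref{prop: dim of stabilizer} and \eqref{eq: dim Hilbert - matrices}, with no new geometric input.

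For (1), Lemma~\ref{lemma: vector bundle over homogeneous locus} shows that $\tC_\h$ is a vector bundle over $\tcC_\h$ with fibres of dimension $q_{k+1}r_k$. Hence $\dim \tC_\h=\dim\tcC_\h+q_{k+1}r_k$, since each irreducible component of the total space lies over a component of the base with the full fibre dimension added. Inserting the lower bound of Theorem~\ref{thm: dim homogeneous C_h} gives
$$\dim \tC_\h\ge q_kr_k+q_{k+1}(r_k+r_{k+1}-3h_k).$$

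For (2), Proposition~\ref{prop: dim of stabilizer} gives $\dim C_\h=\dim\tC_\h+d^2-d-q_{k+1}^2-q_kq_{k+1}-q_k^2$. Combining with (1), we must check that
$$q_kr_k-q_k^2+q_{k+1}(r_k+r_{k+1}-3h_k)-q_{k+1}^2-q_kq_{k+1}$$
equals $q_kh_k+q_{k+1}(h_{k+1}-2h_k)$. The first piece is $q_k(r_k-q_k)=q_kh_k$. In the remaining terms, substitute $r_k-q_k=h_k$ and $r_{k+1}-q_{k+1}=h_{k+1}$. The coefficient of $q_{k+1}$ then becomes $h_k+h_{k+1}-3h_k=h_{k+1}-2h_k$, which is exactly the claimed expression.

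For (3), apply \eqref{eq: dim Hilbert - matrices}, $\dim H_\h=\dim C_\h-d^2+d$, to (2).

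The only point needing care is that the bounds are valid for dimensions of possibly reducible locally closed sets. Dimension additivity holds for vector bundles, and Proposition~\ref{prop: dim of stabilizer} is an exact equality, so the inequality from Theorem~\ref{thm: dim homogeneous C_h} propagates unchanged through every step. The algebraic simplification in (2) is the main place where an error could slip in, so it is worth verifying term by term as above.
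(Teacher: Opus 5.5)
Your proposal is correct and follows the paper's own route: the paper also derives the corollary directly from Theorem \ref{thm: dim homogeneous C_h}, Lemma \ref{lemma: vector bundle over homogeneous locus}, Proposition \ref{prop: dim of stabilizer} and \eqref{eq: dim Hilbert - matrices}. Your term-by-term check, $q_k(r_k-q_k)+q_{k+1}\bigl((r_k-q_k)+(r_{k+1}-q_{k+1})-3h_k\bigr)=q_kh_k+q_{k+1}(h_{k+1}-2h_k)$, is right.
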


\begin{rmk}\label{rmk: smallest new case}
Taking into account different supports of the local algebras, Theorem \ref{thm: main} gives the following estimate for the dimension of the Hilbert scheme:
$$\dim \Hilb{d}{3}\ge q_kh_k+q_{k+1}(h_{k+1}-2h_k)+3.$$
This estimate enables us to find loci inside $\Hilb{d}{3}$ of dimension larger than or equal to $3d$. Since Theorem \ref{thm: main} is a generalization of the case $n=3$ of \cite[Corolary 3.18]{GGGL} relaxing the hypothesis $h_{k+1}\ge \frac{8}{3}h_k$, we get new large loci inside $\Hilb{d}{3}$ for sufficiently large $d$. However, we always get $d\ge 78$, and the smallest new case with $h_{k+1}<\frac{8}{3}h_k$ has $d=98$ and Hilbert function $(1,3,6,10,15,21,28,13,1)$.
\end{rmk}

\begin{rmk}
One may try to get sharper estimates for $\dim H_\h$ by looking at smaller determinantal subvarieties of $\pi(\tcC_\h)$ where the fibres have larger dimension. However, using the classical bounds for codimensions of determinantal varieties we would get weaker estimates than in Theorem \ref{thm: main}. On the other hand, matrices $M_{D_1,D_2,D_3}$ have a special structure, and it is highly possible to get stronger estimates by exploiting the structure of these matrices. In fact, the next example shows that the estimate in Theorem \ref{thm: main} can indeed be significantly improved if the difference $3h_k-h_{k+1}$ is positive and sufficiently large. For this reason, the problem requires further investigation.
\end{rmk}

\begin{ex}
Consider a 2-step Hilbert function of the form $\h=(r_0,r_1,\ldots ,r_{k-1},1,1)$. Theorem \ref{thm: main} then gives $\dim H_\h\ge h_{k+1}-h_k=k+2$.

On the other hand, $\tcC_\h$ is the set of all 6-tuples $(B_1,B_2,B_3,D_1,D_2,D_3)\in \Bbbk^3\times \mathbb{M}_{1\times r_{k-1}}^3$ satisfying $B_iD_j=B_jD_i$ and $D_iE_j=D_jE_i$ for all $i$ and $j$, such that at least one of $B_1,B_2,B_3$ is nonzero and at least one of $D_1,D_2,D_3$ is nonzero. Up to $\GL_3$-action we may assume that $B_1=1$ and $B_2=B_3=0$, which gives $D_2=D_3=0$ and $D_1E_2=D_1E_3=0$. Using the description of the matrices $E_i^{(j)}$ given in Section 3, it is easy to see that the common cokernel of $E_2$ and $E_3$ is 1-dimensional, so $\dim \tcC_\h=3+1=4$. Using Lemma \ref{lemma: vector bundle over homogeneous locus}, Proposition \ref{prop: dim of stabilizer} and equations \eqref{eq: dim Hilbert - matrices} we now get
$$\dim H_\h= 4+r_k-3=\frac{(k+2)(k+1)}{2}+1,$$
which is much bigger than $k+2$ if $k$ is large.
\end{ex}

\end{document}